\documentclass[11pt, a4paper]{article}
%
%
%
\usepackage[latin1]{inputenc}
\usepackage[T1]{fontenc}
\usepackage[sorting=nyt]{biblatex}
\usepackage{amsmath}
\usepackage{amsfonts}
\usepackage{amssymb}
\usepackage{amsthm}
\usepackage[affil-it]{authblk}
\usepackage{enumitem}
\usepackage[normalem]{ulem}
\usepackage{dsfont}
\usepackage{float}
\usepackage{times}
\usepackage{bbm}
\usepackage{mathtools}

\usepackage[colorlinks,linkcolor=blue,citecolor=blue,urlcolor=blue]{hyperref}
\usepackage[capitalise]{cleveref}
\usepackage[left=3.5cm, right=3.5cm, bottom=3cm, top=3cm]{geometry}

\bibliography{reference.bib}

%
\newtheorem{theorem}{Theorem}[section]
\newtheorem{lemma}[theorem]{Lemma}
\newtheorem{proposition}[theorem]{Proposition}
\newtheorem{corollary}[theorem]{Corollary}

\theoremstyle{definition}
\newtheorem{defn}[theorem]{Definition}
\theoremstyle{remark}

\numberwithin{equation}{section}

\crefname{example}{Example}{Examples}

%

\newcommand{\N}{\ensuremath{\mathbb{N}}}

\newcommand{\R}{\ensuremath{\mathbb{R}}}

\renewcommand{\geq}{\geqslant}
\renewcommand{\leq}{\leqslant}

\begin{document}
\title{McKean-Vlasov Equations with Positive Feedback through Elastic Stopping Times}
\author{Ben Hambly \footnote{Mathematical Institute, University of Oxford, United Kingdom, Email: hambly@maths.ox.ac.uk} \quad Julian Meier \footnote{Mathematical Institute, University of Oxford, United Kingdom, Email: julian.meier@maths.oc.ac.uk}} 
\date{\today} 

\maketitle

\begin{abstract}
    We prove existence and uniqueness of physical and minimal solutions to McKean-Vlasov equations with positive feedback through elastic stopping times. We do this by establishing a relationship between this problem and a problem with absorbing stopping times. We show convergence of a particle system to the McKean-Vlasov equation. Moreover, we establish convergence of the elastic McKean-Vlasov problem to the problem with absorbing stopping times and to a reflecting Brownian motion as the elastic parameter goes to infinity or zero respectively.
\end{abstract}

\section{Introduction}

McKean-Vlasov equations with positive feedback through hitting times have recently been used in the modelling of a range of phenomena, from the self-excitation of neurons \cite{Delarue2015a}, \cite{Delarue2015b}, through the modelling of default and systemic risk in banking networks, \cite{HamblyLedgerSojmark2019}, \cite{LedgerSojmark2020}, \cite{LedgerSojmark2021}, \cite{NadtochiySh2019}, \cite{NadtochiySh2020}, to a probabilistic representation for the supercooled Stefan problem \cite{delarue2020global}.
In these models, the feedback is generated through the dependence of the process on the distribution of its hitting times. These create singular interactions that can lead to jumps and cause complexity in the mathematical analysis such as non-uniqueness of solutions in general. 

Our aim in this paper is to consider a variant in which the feedback is provided through elastic stopping times. The McKean-Vlasov equation we consider is
\begin{align} \label{ElasticProblem}
    \begin{cases}
    X_t = X_{0-} + B_t - \alpha \Lambda_t + L_t, \quad t \geq 0 \\
    \Lambda_t = \mathbb{P}(\tau \leq t), \quad t \geq 0, \quad \text{where} \quad \tau = \inf \{t \geq 0: L_t \geq \xi \},
    \end{cases}
\end{align}
where $\xi$ is an exponential random variable with parameter  $\kappa>0$, $X_{0-}$ is a random variable with law $\nu_{0-}, \alpha>0$ a constant, $B$ a standard Brownian motion and $L$ the reflection term we need to add to obtain a non-negative process. In the case of a continuous process, $L$ is the local time but as we will see the process $X$ may possess discontinuities and thus $L$ is the positivity preserving function in the corresponding Skorokhod problem. Feedback models with absorbing stopping times can be obtained by setting $\xi=0$.

These problems are interesting from a mathematical point of view but can also be applied in several areas. For example, they can be used to model default cascades and systemic risk in the banking sector when default does not happen immediately. The elastic boundary can represent factors that can influence default such as negotiations over additional funds. In addition we will see that the elastic feedback model is closely connected to the supercooled Stefan problem with kinetic undercooling discussed in \cite{BakerShk2020}. 

The discontinuities that can occur as a result of the feedback lead to non-uniqueness. To resolve this problem, in the absorbing case, \cite{Delarue2015a} introduced the concept of a physical solution to such a McKean-Vlasov equation. Existence of such solutions has been established for several different feedback models \cite{Delarue2015a}, \cite{HamblyLedgerSojmark2019}, \cite{NadtochiySh2019}, while uniqueness remained an open problem until Delarue et al. \cite{delarue2020global} were able to use the connection to the super-cooled Stefan problem to prove uniqueness for a class of initial conditions with restricted oscillations. In \cite{Cuchiero2020}, the authors analyzed minimal solutions as an alternative natural solution concept. They were able to prove the existence of minimal solutions and show that minimal solutions are physical solutions.

In Section~2 we will establish existence and uniqueness of  physical and minimal solutions to the elastic feedback problem by exploiting a simple connection to a corresponding absorbing model. In Section~3 we show the connection to the Stefan problem with kinetic undercooling. We show weak convergence of a particle system to the McKean-Vlasov problem (\ref{ElasticProblem}) in Section~4. Finally, in Section~5, we prove convergence of $(\ref{ElasticProblem})$ to a reflecting Brownian motion and an absorbing feedback model when letting $\kappa$ go to zero or infinity, respectively. This is a probabilistic extension of the result in \cite{BakerShk2020} where convergence to the absorbing feedback model is shown under the assumption of continuous solutions.

\section{Physical and Minimal Solutions}

A solution to the problem (\ref{ElasticProblem}) is a deterministic cadlag function $t \mapsto \Lambda_t$, called the loss function, that starts at $0$, is increasing and satisfies the problem $(\ref{ElasticProblem})$.  We will define a physical and a minimal solution for this problem. We will then show how to exploit the results for absorbing equations to prove that (\ref{ElasticProblem}) has unique physical and minimal solutions and that these coincide.

The process $X$ is a reflected process that is the result of an application of the Skorokhod map $\Theta: D([0,\infty)) \to D([0,\infty))$ that maps a real-valued cadlag function to the corresponding non-negative reflected process (see \cite{Whitt} Chapter 13.5). We see that the process $X$ can be constructed by taking a process $Y$ defined as 
\begin{equation} \label{DefnYProcess}
    Y_t = X_{0-}+B_t - \alpha \Lambda_t
\end{equation}
and applying the Skorokhod map $\Theta$, i.e. $X = \Theta(Y)$. The term $L$ that is needed to keep $X$ nonnegative is given by
\begin{equation} \label{SkorMapL}
    L_t = (-\inf_{0 \leq s \leq t}Y_s ) \lor 0.
\end{equation}
We can use this to find the following expression for the elastic stopping time
\begin{equation*}
    \tau = \inf\{ t \geq 0 : L_t \geq \xi \} = \inf \{t \geq 0 : Y_t \leq - \xi \}
\end{equation*}
and the loss function $\Lambda_t$ becomes
\begin{equation*}
    \Lambda_t = \mathbb{P}(\tau \leq t) = \mathbb{P}(\inf_{0 \leq s \leq t} Y_s \leq - \xi).
\end{equation*}

A first result that is important in the elastic case uses a simple argument, similar to that proving \cite{HamblyLedgerSojmark2019} Theorem~1.1, to show that if the feedback parameter $\alpha$ is large enough, any solution to $(\ref{ElasticProblem})$ cannot be continuous for all times $t \geq 0$.

\begin{lemma}
Let $m_{0-} \coloneqq \int_0^{\infty} x \nu_{0-}(dx)$. If $\alpha > 2 (m_{0-} + \frac{1}{\kappa})$, then any solution $\Lambda$ to (\ref{ElasticProblem}) cannot be continuous for all times $t \geq 0$.
\end{lemma}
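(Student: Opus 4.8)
The plan is to argue by contradiction, adapting the mass‑balance idea behind \cite{HamblyLedgerSojmark2019}, Theorem~1.1, but now using that the reflection term stopped at the elastic time $\tau$ is dominated by the \emph{independent} exponential threshold $\xi$. So suppose $\Lambda$ is continuous on $[0,\infty)$. Then $Y$ from \eqref{DefnYProcess} has continuous paths, hence so does $L$ from \eqref{SkorMapL}, and $X=\Theta(Y)\geq 0$ throughout.

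First I would check that $\Lambda_\infty \coloneqq \lim_{t\to\infty}\Lambda_t = 1$. Since $\Lambda_t\leq 1$ we have $Y_t \leq X_{0-}+B_t$, and because $\liminf_{t\to\infty}B_t=-\infty$ almost surely while $X_{0-}<\infty$ a.s., the reflection term satisfies $L_\infty = (-\inf_{s\geq 0}Y_s)\lor 0 = \infty$ a.s.; as $\xi<\infty$ a.s.\ this forces $\tau=\inf\{t\geq 0: L_t\geq\xi\}<\infty$ a.s., i.e.\ $\Lambda_\infty=1$. (Note this step, and the whole estimate, needs $m_{0-}<\infty$, which is implicit in the statement.)

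Next, fix $t\geq 0$ and take expectations in $X_{t\wedge\tau}=X_{0-}+B_{t\wedge\tau}-\alpha\Lambda_{t\wedge\tau}+L_{t\wedge\tau}\geq 0$. Since $\tau$ is a stopping time for the filtration generated by $(X_{0-},\xi,B)$, optional stopping at the bounded time $t\wedge\tau$ gives $\mathbb{E}[B_{t\wedge\tau}]=0$. By continuity of $L$: on $\{\tau\leq t\}$ we have $L_{t\wedge\tau}=L_\tau=\xi$, and on $\{\tau>t\}$ we have $L_{t\wedge\tau}=L_t<\xi$; hence $\mathbb{E}[L_{t\wedge\tau}]\leq\mathbb{E}[\xi]=1/\kappa$. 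Since $\tau$ has (sub-)distribution function $\Lambda$ and $\Lambda$ is continuous, a change of variables gives $\mathbb{E}[\Lambda_{t\wedge\tau}]=\int_{[0,t]}\Lambda_s\,d\Lambda_s+\Lambda_t\,\mathbb{P}(\tau>t)=\tfrac12\Lambda_t^2+\Lambda_t(1-\Lambda_t)=\Lambda_t-\tfrac12\Lambda_t^2$. Combining these with $\mathbb{E}[X_{0-}]=m_{0-}$ yields $\alpha\bigl(\Lambda_t-\tfrac12\Lambda_t^2\bigr)\leq m_{0-}+\tfrac1\kappa$ for every $t\geq 0$. Letting $t\to\infty$ and using $\Lambda_t\to 1$ gives $\alpha/2\leq m_{0-}+1/\kappa$, i.e.\ $\alpha\leq 2(m_{0-}+\tfrac1\kappa)$, which contradicts the hypothesis.

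The optional‑stopping step and the change‑of‑variables integral are routine. The one genuinely delicate point is the bound $\mathbb{E}[L_{t\wedge\tau}]\leq 1/\kappa$: this rests on continuity of $\Lambda$ (hence of $Y$, hence of $L$), which is exactly what guarantees $L_\tau=\xi$ rather than $L_\tau>\xi$. If $L$ could jump over the level $\xi$, both this inequality and the whole argument would break — which is precisely why the conclusion takes the form of a failure of continuity.
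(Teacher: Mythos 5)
Your proof is correct and follows essentially the same mass-balance contradiction as the paper: take expectations in $0\leq X_{t\wedge\tau}$, use continuity to get $L_\tau=\xi$ so $\mathbb{E}[L_{t\wedge\tau}]$ is controlled by $\mathbb{E}[\xi]=1/\kappa$, identify $\mathbb{E}[\Lambda_{t\wedge\tau}]$ via the fact that $\tau$ has distribution function $\Lambda$, and send $t\to\infty$ to extract $\alpha/2\leq m_{0-}+1/\kappa$. The only (cosmetic) difference is that you keep a uniform-in-$t$ bound $\mathbb{E}[L_{t\wedge\tau}]\leq 1/\kappa$ and compute $\mathbb{E}[\Lambda_{t\wedge\tau}]$ exactly at finite $t$, whereas the paper passes directly to the $t\to\infty$ limit inside the expectations.
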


\begin{proof}
Assume $\alpha > 2 (m_{0-} + \frac{1}{\kappa})$ and $\Lambda$ is a continuous solution. We have that
\begin{equation*}
    0 \leq X_{t \wedge \tau} = X_{0-}+ B_{t \wedge \tau} - \alpha \Lambda_{t \wedge \tau} + L_{t \wedge \tau}.
\end{equation*}
After taking expectations and rearranging
\begin{equation*}
    m_{0-} \geq \alpha \mathbb{E}[\Lambda_{t \wedge \tau}] - \mathbb{E}[L_{t \wedge \tau}].
\end{equation*}
Taking the limit as $t \to \infty$ and noting that $\Lambda_{\infty}=1$, as $\inf_{t \geq 0}B_t = - \infty$, yields
\begin{align*}
    m_{0-} &\geq \alpha \int_0^{\infty} \Lambda_s d\Lambda_s - \mathbb{E}[\xi]  = \frac{1}{2}\alpha (\Lambda^2_{\infty}-\Lambda^2_0)- \frac{1}{\kappa} = \frac{1}{2} \alpha - \frac{1}{\kappa}.
\end{align*}
Thus we get $2(m_{0-} + \frac{1}{\kappa}) \geq \alpha$, a contradiction.
\end{proof}

The fact that, as in the absorbing case, solutions in general are not continuous means we need to define a suitable solution concept. The two most natural ones to consider are that of a physical solution and a minimal solution. 

\begin{defn}[Minimal Solution]
A minimal solution $\underline{\Lambda}$ is a solution to $(\ref{ElasticProblem})$ such that for any other solution $\Lambda$ we have
\begin{equation*}
    \underline{\Lambda}_t \leq \Lambda_t, \quad t \geq 0.
\end{equation*}
\end{defn}
In the case of absorbing feedback models these solutions have been introduced and studied in detail in~\cite{Cuchiero2020}. 

Earlier the idea of a physical solution was introduced by Delarue et al.~\cite{Delarue2015a}. 
A physical solution, in the absorbing case, is defined as a solution $\Lambda$ such that the condition
\begin{equation} \label{absorbingPhysial}
    \Delta \Lambda_t = \inf \{x \geq 0: \mathbb{P}(t<\tau^a, 0 < X_{0-} + B_t - \alpha \Lambda_{t-}<\alpha x) < x \}
\end{equation}
is satisfied, where $\tau^{a}=\inf\{t\geq 0: X_t\leq 0\}$ denotes the absorbing stopping time.

Recalling the definitions of the elastic stopping time in $(\ref{ElasticProblem})$ and the map $L$ from (\ref{SkorMapL}) an equivalent condition for a solution $\Lambda$ to the elastic feedback model can be defined.

\begin{defn}[Physical Solution]
A physical solution $\Lambda$ to $(\ref{ElasticProblem})$ is a solution such that the condition
\begin{equation} \label{elasticPhysical}
    \Delta \Lambda_t = \inf \{x \geq 0: \mathbb{P} (t < \tau, 0 < X_{0-}+ \xi +B_t - \alpha \Lambda_{t-}<\alpha x) < x  \}
\end{equation}
holds for all $t \geq 0$.
\end{defn}

We will establish the existence and uniqueness of a physical solution for $(\ref{ElasticProblem})$. The key idea is that we can relate the elastic feedback model to an absorbing feedback model for which these are known results by \cite{delarue2020global}. 

\begin{lemma} \label{ElasticAbsorbingLemma}
Let $\Lambda$ be an increasing cadlag function with $\Lambda_0=0$. Then $\Lambda$ is a solution to the McKean-Vlasov problem (\ref{ElasticProblem}) if and only if it is a solution to the absorbing feedback model 
\begin{align} \label{AbsorbingProblem}
    \begin{cases}
        Y_t = Y_{0-}+B_t-\alpha \Lambda^{a}_t, \quad t \geq 0 \\
         \Lambda_t^{a} = \mathbb{P}(\tau^a \leq t), \quad t \geq 0, \quad \text{where} \quad \tau^a = \inf \{t \geq 0 : Y_t \leq 0 \}
    \end{cases}
\end{align}
with initial condition $Y_{0-} \coloneqq X_{0-}+\xi$.
\end{lemma}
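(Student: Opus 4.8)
The plan is to exploit the explicit characterization, given in the excerpt, of the reflected process $X = \Theta(Y)$ in terms of the unreflected process $Y$ from \eqref{DefnYProcess}, together with the identity $\tau = \inf\{t \geq 0 : Y_t \leq -\xi\}$ for the elastic stopping time. The crucial observation is that the randomness of $\xi$ can be absorbed into the initial condition: if we set $Y_{0-} \coloneqq X_{0-} + \xi$ and write $\widetilde{Y}_t \coloneqq Y_{0-} + B_t - \alpha \Lambda_t$, then $\widetilde{Y}_t = \xi + (X_{0-} + B_t - \alpha \Lambda_t)$, which is exactly $\xi$ plus the unreflected process appearing in \eqref{ElasticProblem}. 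Hence $\{\widetilde{Y}_t \leq 0\} = \{X_{0-} + B_t - \alpha \Lambda_t \leq -\xi\}$, and since $B$ and $\Lambda$ do not depend on $\xi$, the running infimum satisfies $\inf_{0 \leq s \leq t} \widetilde{Y}_s \leq 0$ if and only if $\inf_{0 \leq s \leq t}(X_{0-} + B_s - \alpha\Lambda_s) \leq -\xi$.

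First I would make precise that in the elastic problem \eqref{ElasticProblem} the Brownian motion $B$, the loss function $\Lambda$, and the exponential variable $\xi$ live on a common space with $\xi$ independent of $(X_{0-}, B)$ — or at least that $\tau$ is defined via the pathwise infimum of $Y$ against the independent level $-\xi$, as recorded in the excerpt. Then I would argue the equivalence of the two fixed-point conditions directly: for a fixed increasing cadlag $\Lambda$ with $\Lambda_0 = 0$, the elastic constraint reads $\Lambda_t = \mathbb{P}(\inf_{0\leq s\leq t}(X_{0-}+B_s-\alpha\Lambda_s) \leq -\xi)$, while the absorbing constraint for \eqref{AbsorbingProblem} with $Y_{0-} = X_{0-}+\xi$ reads $\Lambda_t = \mathbb{P}(\inf_{0\leq s\leq t}(Y_{0-}+B_s-\alpha\Lambda_s) \leq 0)$. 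Substituting $Y_{0-} = X_{0-}+\xi$ shows these two events coincide pathwise, so the right-hand sides are literally equal; hence $\Lambda$ solves one fixed-point equation if and only if it solves the other. Note that the initial value $\Lambda_0 = 0$ is consistent in both problems because $\xi > 0$ almost surely, so $\mathbb{P}(Y_{0-} \leq 0) = \mathbb{P}(X_{0-} + \xi \leq 0) = 0$.

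The main subtlety — and the step I would be most careful about — is the treatment of the reflection term $L$ in \eqref{ElasticProblem} versus the absence of reflection in \eqref{AbsorbingProblem}. One must verify that the Skorokhod representation \eqref{SkorMapL}, $L_t = (-\inf_{0\leq s\leq t} Y_s) \lor 0$, genuinely yields $\tau = \inf\{t : Y_t \leq -\xi\}$ as claimed in the excerpt, i.e. that $L_t \geq \xi \iff \inf_{0\leq s \leq t} Y_s \leq -\xi$; this is immediate from the monotonicity of $t \mapsto L_t$ and the definition of the infimum, but it is the point where the reflected dynamics of $X$ get translated into the unreflected dynamics of $Y$. Once this translation is in place, the reflection plays no further role, because the stopping time (and hence $\Lambda$) depends on the path of $X$ only through the path of $Y$ up to time $\tau$. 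I would close by remarking that, as a consequence, existence and uniqueness of physical and minimal solutions for \eqref{ElasticProblem} follow from the corresponding results for \eqref{AbsorbingProblem} in \cite{delarue2020global} and \cite{Cuchiero2020}, after checking that the physical-solution condition \eqref{elasticPhysical} is the image of the absorbing condition \eqref{absorbingPhysial} under the same substitution $X_{0-} + \xi \leftrightarrow Y_{0-}$ — which is exactly how \eqref{elasticPhysical} was written down.
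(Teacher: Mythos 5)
Your argument is essentially identical to the paper's: both rewrite $\Lambda_t = \mathbb{P}(L_t \geq \xi) = \mathbb{P}(\inf_{s\leq t}(X_{0-}+B_s-\alpha\Lambda_s)\leq -\xi) = \mathbb{P}(\inf_{s\leq t}(\xi + X_{0-}+B_s-\alpha\Lambda_s)\leq 0)$, absorbing $\xi$ into the initial condition to match the absorbing fixed-point equation $\Lambda^a$. You spell out a few of the small checks (the translation $L_t \geq \xi \iff \inf_{s\leq t} Y_s \leq -\xi$ via \eqref{SkorMapL}, and consistency of $\Lambda_0 = 0$) that the paper leaves implicit, but the route is the same.
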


\begin{proof}
Let $\Lambda$ be a solution to the problem (\ref{ElasticProblem}). We can rewrite this using the definition of the stopping time $\tau$ and the Skorokhod map $\Theta$ in the following way
\begin{align*}
    \Lambda_t =\mathbb{P}(\tau \leq t) = \mathbb{P}(L_t \geq \xi) &= \mathbb{P}(\inf_{s \leq t}(X_{0-}+B_s-\alpha \Lambda_s) \leq - \xi) \\
    &= \mathbb{P}(\inf_{s \leq t}(\xi + X_{0-}+B_s-\alpha \Lambda_s) \leq 0) \\
    &= \Lambda^a_t.
\end{align*}
Therefore, a solution to the elastic feedback model (\ref{ElasticProblem}) is also a solution to the absorbing model $(\ref{AbsorbingProblem})$. Conversely, any solution to $(\ref{AbsorbingProblem})$ solves the elastic model.
\end{proof}

\begin{theorem} \label{ThmPhysicalSoltn}
Let $X_{0-}$ be a random variable with bounded density $f$ such that $f$ changes monotonicity at most finitely often on any compact interval. Then there exists a unique physical solution to the elastic feedback model (\ref{ElasticProblem}).
\end{theorem}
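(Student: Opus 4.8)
The plan is to reduce the statement to the known existence and uniqueness result for physical solutions of the absorbing feedback model via Lemma~\ref{ElasticAbsorbingLemma}, and then to check that the elastic physicality condition \eqref{elasticPhysical} is exactly the translate of the absorbing physicality condition \eqref{absorbingPhysial} under the correspondence $Y_{0-} = X_{0-}+\xi$. First I would observe that by Lemma~\ref{ElasticAbsorbingLemma} the set of solutions $\Lambda$ of \eqref{ElasticProblem} coincides with the set of solutions $\Lambda^a$ of the absorbing problem \eqref{AbsorbingProblem} with initial datum $Y_{0-} = X_{0-}+\xi$; in particular, since $\Lambda$ determines $\Lambda_{t-}$ and hence the jump $\Delta\Lambda_t$, the condition \eqref{elasticPhysical} characterizing a physical solution of \eqref{ElasticProblem} is literally the condition \eqref{absorbingPhysial} written for the shifted process $Y_t = X_{0-}+\xi+B_t-\alpha\Lambda_t$, because $\{t<\tau\} = \{t<\tau^a\}$ (same event, as shown in the proof of Lemma~\ref{ElasticAbsorbingLemma}) and $0 < Y_{0-}+B_t-\alpha\Lambda_{t-} < \alpha x$ is the same event as $0 < X_{0-}+\xi+B_t-\alpha\Lambda_{t-}<\alpha x$. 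Hence $\Lambda$ is a physical solution of \eqref{ElasticProblem} if and only if it is a physical solution of \eqref{AbsorbingProblem} with initial law $\mathcal{L}(X_{0-}+\xi)$.

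The second step is to verify that $Y_{0-} = X_{0-}+\xi$ satisfies the hypotheses under which existence and uniqueness of physical solutions to \eqref{AbsorbingProblem} is known (the result of Delarue et al.\ \cite{delarue2020global} quoted before the statement). Concretely, $\xi$ is $\mathrm{Exp}(\kappa)$, independent of $X_{0-}$, so the density of $Y_{0-}$ is the convolution $g(y) = \int_0^{y} f(y-u)\,\kappa e^{-\kappa u}\,du$. I would check that $g$ is bounded — indeed $\|g\|_\infty \le \|f\|_\infty$ since $g$ is an average of translates of $f$ — and that $g$ inherits the "finitely many changes of monotonicity on compacts" property from $f$; this is the one point requiring a short argument, and I expect it to be the main (minor) obstacle. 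One clean way: write $g(y) = \kappa e^{-\kappa y}\int_0^y f(v)e^{\kappa v}\,dv$, so $g'(y) = \kappa f(y) - \kappa g(y)$ (where $f$ has a weak derivative / is differentiable a.e.), and argue that on an interval where $f$ is monotone, $g' = \kappa(f-g)$ can change sign only a controlled number of times because $g$ is smooth with $g' = \kappa(f-g)$ forcing $f-g$ to behave monotonically-ish between crossings; alternatively invoke directly the regularity class used in \cite{delarue2020global} and check $g$ lies in it. I would spell out whichever phrasing matches the precise hypothesis of the cited theorem.

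Having established that $\mathcal{L}(X_{0-}+\xi)$ falls in the admissible class, I would conclude: by \cite{delarue2020global} there is a unique physical solution $\Lambda^a$ to \eqref{AbsorbingProblem} with initial condition $Y_{0-}=X_{0-}+\xi$; by the equivalence in the first step this $\Lambda^a$ is the unique physical solution to \eqref{ElasticProblem}. The only genuine content beyond bookkeeping is the density-regularity check in the second paragraph; everything else is the translation dictionary supplied by Lemma~\ref{ElasticAbsorbingLemma} together with the fact — immediate from \eqref{SkorMapL} and the proof of that lemma — that the elastic event $\{t<\tau\}$ and the "surviving mass about to cross" event in \eqref{elasticPhysical} are the pushforwards under the shift $x\mapsto x+\xi$ of their absorbing counterparts.
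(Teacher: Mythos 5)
Your proposal is correct and follows essentially the same route as the paper: pass to the absorbing problem via Lemma~\ref{ElasticAbsorbingLemma}, observe that the physicality conditions \eqref{absorbingPhysial} and \eqref{elasticPhysical} correspond under $Y_{0-}=X_{0-}+\xi$, verify the density of $Y_{0-}$ lies in the admissible class, and invoke \cite{delarue2020global}. Two small remarks. First, the paper additionally records that the convolution with the $\mathrm{Exp}(\kappa)$ density forces $g(0)=0$; since $g$ is the density that must satisfy the hypotheses at the absorbing boundary, you should state this explicitly when checking the hypotheses of \cite{delarue2020global} rather than omit it. Second, your sketch of why $g$ changes monotonicity finitely often is on the right track but, as you flag, needs to be tightened; the clean version of your ODE argument is: on an interval where $f$ is monotone increasing, if $g$ attained an interior local maximum at $y_1$ then $g(y_1)=f(y_1)$, and for $y>y_1$ one would have $g(y)\leq g(y_1)=f(y_1)\leq f(y)$, hence $g'(y)=\kappa(f(y)-g(y))\geq 0$, contradicting a genuine decrease to the right — so $g$ can switch direction at most once per monotonicity interval of $f$, plus possibly at the endpoints. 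Spelling this out would make your proposal fully rigorous where the paper's "inherits the properties" is terse.
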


\begin{proof}
By Lemma \ref{ElasticAbsorbingLemma} we know that an increasing cadlag process $\Lambda$ solves the McKean-Vlasov problem (\ref{ElasticProblem}) if and only if it solves the problem (\ref{AbsorbingProblem}). Additionally, considering the definition of a physical solution to the absorbing model (\ref{absorbingPhysial}) and to the elastic model (\ref{elasticPhysical}), we see that the same holds for physical solutions. Hence a map $\Lambda$ is a physical solution to the elastic feedback model if and only if it is a physical solution to the absorbing feedback model (\ref{AbsorbingProblem}). The random variable $Y_{0-}$ with the law of $X_{0-}+\xi$, has a density $g$ which inherits the properties from $f$, i.e. $g$ is bounded and changes monotonicity at most finitely often on compact intervals. Moreover, since $\xi$ is an exponential random variable we obtain from the convolution formula that $g(0)=0$. We can apply the results from \cite{delarue2020global} for absorbing feedback models and conclude that the elastic feedback model $(\ref{ElasticProblem})$ has a unique physical solution. 
\end{proof}

Similarly, Lemma  \ref{ElasticAbsorbingLemma} can be combined with the ideas from \cite{Cuchiero2020} about minimal solutions.
\begin{theorem}
The elastic feedback model has a unique minimal solution. Moreover, if the conditions of Theorem \ref{ThmPhysicalSoltn} hold, then the unique minimal solution and the unique physical solution coincide.
\end{theorem}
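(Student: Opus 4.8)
The plan is to run exactly the same reduction as in the proof of Theorem \ref{ThmPhysicalSoltn}. First I would invoke Lemma \ref{ElasticAbsorbingLemma} to identify, as \emph{sets of cadlag functions}, the solutions of the elastic problem (\ref{ElasticProblem}) with the solutions of the absorbing problem (\ref{AbsorbingProblem}) with initial datum $Y_{0-} \coloneqq X_{0-}+\xi$. Since the two solution sets literally coincide, the pointwise partial order $\Lambda^{1}\leq\Lambda^{2}$ (meaning $\Lambda^{1}_t\leq\Lambda^{2}_t$ for all $t\geq 0$) is the same on both, and therefore $\underline{\Lambda}$ is a minimal solution of (\ref{ElasticProblem}) if and only if it is a minimal solution of (\ref{AbsorbingProblem}) with that initial condition. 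Uniqueness of the minimal solution, once existence is known, is immediate from the definition: if $\underline{\Lambda}^{1}$ and $\underline{\Lambda}^{2}$ are both minimal then each dominates the other, so they agree.

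For existence I would appeal to \cite{Cuchiero2020}, who construct a minimal solution to absorbing feedback models of the form (\ref{AbsorbingProblem}). The only real point to check is that the hypotheses of their construction are inherited by the transformed initial law. As already observed in the proof of Theorem \ref{ThmPhysicalSoltn}, the law of $Y_{0-}=X_{0-}+\xi$ always possesses a density $g$ obtained by convolving $\nu_{0-}$ with the exponential density; concretely $g(y)=\kappa e^{-\kappa y}\int_{[0,y]}e^{\kappa x}\,\nu_{0-}(dx)$, which is continuous, bounded by $\kappa$, vanishes at $0$, and has finite first moment $m_{0-}+\tfrac1\kappa$ (assuming $m_{0-}<\infty$). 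Thus the construction of \cite{Cuchiero2020} applies and produces a minimal solution $\underline{\Lambda}$ of (\ref{AbsorbingProblem}), which by the equivalence above is the unique minimal solution of (\ref{ElasticProblem}).

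For the second assertion I would use the further fact from \cite{Cuchiero2020} that every minimal solution of an absorbing feedback model is a physical solution. Combining this with the correspondence of \emph{physical} solutions already established in the proof of Theorem \ref{ThmPhysicalSoltn} (the defining conditions (\ref{absorbingPhysial}) and (\ref{elasticPhysical}) match under $X_{0-}\mapsto X_{0-}+\xi$), we get that $\underline{\Lambda}$ is a physical solution of (\ref{ElasticProblem}). Under the hypotheses of Theorem \ref{ThmPhysicalSoltn} the physical solution is unique, hence $\underline{\Lambda}$ must coincide with it.

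The main obstacle is the bookkeeping in the previous paragraph but one: verifying that the precise regularity and integrability assumptions under which \cite{Cuchiero2020} prove existence of a minimal solution (and that minimal implies physical) are genuinely met by the shifted initial condition $Y_{0-}=X_{0-}+\xi$. Everything else is a direct transfer through Lemma \ref{ElasticAbsorbingLemma}.
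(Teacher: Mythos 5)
Your proposal is correct and follows essentially the same route as the paper: transfer to the absorbing problem via Lemma~\ref{ElasticAbsorbingLemma}, invoke \cite{Cuchiero2020} for existence of a minimal solution and for the fact that minimal implies physical, then conclude with the uniqueness of physical solutions from Theorem~\ref{ThmPhysicalSoltn}. You simply spell out more explicitly the preservation of the pointwise partial order and the regularity bookkeeping on the shifted law $X_{0-}+\xi$, which the paper leaves implicit.
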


\begin{proof}
Using the reformulation in terms of an absorbing feedback problem from Lemma \ref{ElasticAbsorbingLemma}, by \cite{Cuchiero2020} there exists a unique minimal solution and the minimal solution is a physical solution. If we additionally have that the density of $X_{0-}$ is bounded and changes monotonicity at most finitely often on compact intervals, Theorem \ref{ThmPhysicalSoltn} applies and the physical solution is unique.
\end{proof}

\section{PDE Formulation and Connection with the Kinetic Undercooled Stefan Problem}

In \cite{BakerShk2020} the authors study a free boundary problem called the supercooled Stefan problem with kinetic undercooling as an approximation to the supercooled Stefan problem. They work in the regime without jumps and use a probabilistic representation of solutions based on a Feynmac-Kac formula to derive their results. In this section we show, under the assumption of continuous solutions, that the elastic feedback model is a probabilistic formulation of the Stefan problem with kinetic undercooling. Thus, the results in Section 2 on existence and uniqueness, as well as the approximation results discussed in Section 5, are a probabilistic generalisation of the results in \cite{BakerShk2020}.
We define a measure $\nu$ by its actions on  test functions $\phi \in C^2$ with $\kappa \phi(0) = \partial_x \phi(0)$ as $\nu_t(\phi)  \coloneqq \mathbb{E}[\phi(X_t)\mathbbm{1}_{t < \tau}]$. To obtain the dynamics we calculate $\phi(X_{t \wedge \tau})$ by applying Ito's formula 
\begin{align*}
    d\phi(X_{t \wedge \tau}) &= \partial_x \phi(X_{t \wedge \tau})dB_t - \alpha \partial_x \phi(X_{t \wedge \tau})d\Lambda_t + \partial_x \phi(X_{t \wedge \tau})dL_t + \frac{1}{2} \partial_{xx} \phi(X_{t \wedge \tau})dt.
\end{align*}
As $X_{\tau} = 0$, we have
\begin{equation*}
    \phi(X_{t \wedge \tau}) = \phi(X_t) \mathbbm{1}_{t < \tau} + \phi(0) \mathbbm{1}_{\tau \leq t}.
\end{equation*}
Using this fact and taking expectations we have the measure dynamics
\begin{align*}
    d\nu_t(\phi) &= \frac{1}{2} \nu_t(\partial_{xx} \phi)dt - \alpha \nu_t(\partial_x \phi)d\Lambda_t  + \kappa \phi(0) \mathbb{E}[L_{t \wedge \tau}] - \phi(0) \mathbb{P}(\tau \leq t).  
\end{align*}
We focus on the last two terms and compute 
\begin{align*}
    \kappa \mathbb{E}[L_{t \wedge \tau}] - \mathbb{P}(\tau \leq t) & =  \kappa \mathbb{E}[L_t \mathbbm{1}_{t < \tau}]  + \kappa \mathbb{E}[\xi \mathbbm{1}_{t \geq \tau}] - \mathbb{P}(\tau \leq t) \\
    &= \kappa \mathbb{E}[L_t e^{-\kappa L_t}]- \kappa\mathbb{E}[ L_t e^{- \kappa L_t}] + \mathbb{P}(\tau \leq t) -\mathbb{P}(\tau \leq t) = 0.
\end{align*}
Thus we obtain the weak form of the nonlinear PDE
\begin{align*}
\begin{cases}
    \nu_t(\phi) = \nu_0(\phi) + \frac{1}{2} \int_0^t \nu_s(\partial_{xx} \phi)ds - \alpha \int_0^t \nu_s(\partial_x\phi)d\Lambda_s, \\
    \Lambda_t = 1- \int_0^{\infty} \nu_t(dx).
\end{cases}
\end{align*} 

By removing the continuous drift using Girsanov's theorem and applying the Radon-Nikodym Theorem it can be shown that the measure $\nu_t$ possesses a density $V(t,x)$. We can use integration by parts to derive the PDE governing the evolution of the density
\begin{equation*}
    \begin{cases}
        \partial_t V(t,x) = \frac{1}{2} \partial_{xx} V(t,x) + \alpha \Dot{\Lambda}_t \partial_x V(t,x), \\
        \frac{1}{2} \partial_x V(t,0) = \left(\frac{\kappa}{2} - \alpha \Dot{\Lambda}_t \right) V(t,0). 
    \end{cases}
\end{equation*}
Now we reparametrize and set $\beta=\frac{2}{\alpha}$, $\mathcal{L} = \frac{\alpha}{2}\Lambda$,   and for an $\varepsilon>0$ set $\kappa=\beta/\varepsilon$. Finally, writing $u(t,x)=V(t,x-\mathcal{L}_t)$, we have that $u$ solves the free boundary problem
\begin{equation*}
    \begin{cases}
        \partial_t u = \frac{1}{2} \partial_{xx} u, \quad \text{on } \Gamma \coloneqq \{(t,x) = [0,\infty)^2 : x \geq \mathcal{L}_t \}, \\
        u(t,\mathcal{L}_t) = \varepsilon \Dot{\mathcal{L}}_t, \quad t \geq 0, \\
        (\beta-2 u(t,\mathcal{L}_t))\Dot{\mathcal{L}}_t = \partial_x u(t,\mathcal{L})_t, \quad t \geq 0, \quad \mathcal{L}_0 = 0.
    \end{cases}
\end{equation*}
This is the Stefan problem with kinetic undercooling as discussed in  \cite{BakerShk2020}.

\section{Particle System Approximation}

We consider a system of particles with elastic feedback given by
\begin{align} \label{particleSystem}
    \begin{cases}
        X_t^{i,N} = X_{0-}^i + B_t^i - \alpha \Lambda_t^N + L_t^{i,N}, \quad t \geq 0 \\
        \Lambda_t^N = \frac{1}{N} \sum_{i=1}^N \mathbbm{1}_{\tau_{i,N} \leq t}, \quad t \geq 0, \quad \text{where} \quad \tau_{i,N} = \inf\{t \geq 0, L_t^{i,N} \geq \xi^i \},
    \end{cases}
\end{align}
where the parameters are as discussed in Section~1 and the random variables $\xi_i$ are i.i.d.
We can use the ideas from Section 2 and formulate this as a system of particles with absorbing boundary and initial conditions $X_{0-}^i+\xi^i$. This way, we see that the theory for absorbing particle systems also holds for the elastic ones. As for our McKean-Vlasov problem, minimal and physical solutions can be defined for this particle system. In \cite{Cuchiero2020} existence of a minimal solution is shown. Moreover, the authors establish that the physical solution is equal to the minimal solution. 

The goal of this section is to establish weak convergence, on the Skorokhod space with the M1-topology \cite{Whitt}, of the system (\ref{particleSystem}) to a solution to the McKean-Vlasov equation (\ref{ElasticProblem}). After verifying tightness in the M1-topology we show that physical solutions to the particle system converge to physical solutions of (\ref{ElasticProblem}). We can then refer to the uniqueness results in Theorem \ref{ThmPhysicalSoltn} to deduce weak convergence. 
The main idea is again to use the structure of the processes $X_t^{i,N}$ in terms of the Skorokhod map $\Theta$, that is
\begin{equation*}
    X_t^{i,N} = \Theta(Y_t^{i,N}) =\Theta(X_{0-}^i+B_t^i- \alpha \Lambda_t^N)
\end{equation*}
and exploit the fact that the Skorokhod map is continuous on the Skorokhod space with the M1-topology (see \cite{Whitt} Theorem 13.5.1).
\subsection{Tightness}
We closely follow the approach in \cite{Cuchiero2020}. We consider the loss processes $\Lambda^N$ to be elements of the space $M$ defined by
\begin{equation} \label{spaceM}
    M \coloneqq \{l: \bar{\R} \to [0,1] \vert l \text{ cadlag and increasing, } l_{0-}=0, l_{\infty}=1 \}.
\end{equation}
Writing $\mathcal{P}(E)$ for the space of probability measures on a Polish space $E$, the space $M$ can be identified with distribution functions of measures in $\mathcal{P}([0,\infty])$. If we equip it with the topology of weak convergence it becomes a compact Polish space, because the topology is metrizable (for example using the Levy metric).
Define the spaces $\Bar{E}$ and $\hat{E}$ as
\begin{equation*}
    \Bar{E} \coloneqq C([0,\infty)) \times M, \quad \hat{E} \coloneqq \Bar{E} \times \R.
\end{equation*}
    
\begin{theorem} \label{MinusOneExtension}
Endowed with the product topology induced by compact convergence on $C([0,\infty))$, and the Levy-metric on M, the space $\Bar{E}$ and then also $\hat{E}$ are Polish. For $w \in C([0,\infty))$ and $l \in M$, define
\begin{align*}
    \hat{w}_t \coloneqq 
    \begin{cases}
        w_0 \quad t \in [-1,0) \\
        w_t \quad t \in [0,\infty)
    \end{cases}
    \quad
    \check{l}_t \coloneqq
    \begin{cases}
        0 \quad t \in [-1,0) \\
        l_t \quad t \in [0,\infty).
    \end{cases}
\end{align*}
For any $\alpha \in \R$ the embedding $\iota_{\alpha}: \hat{E} \to (D([-1,\infty)) \times \R)$ defined via
\begin{equation*}
    \iota_{\alpha}(w,l,z) = (\hat{w}-\alpha \check{l},z)
\end{equation*}
is continuous.
\end{theorem}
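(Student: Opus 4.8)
The plan is to dispatch the three assertions in turn; only the continuity of $\iota_{\alpha}$ carries real content, and for that I would argue sequentially, exploiting the structure of the $M_1$ topology on $D([-1,\infty))$ (which is the topology I take on the target, consistently with the rest of the paper). For Polishness: $C([0,\infty))$ with the topology of uniform convergence on compact sets is separable and completely metrizable, e.g. by $d(w,w')=\sum_{k\geq 1}2^{-k}\big(1\wedge\sup_{0\leq t\leq k}|w_t-w'_t|\big)$, hence Polish; and $M$, identified with the distribution functions of probability measures on the compact space $[0,\infty]$, is compact and metrizable by the Levy metric, hence Polish. A finite product of Polish spaces is Polish, so $\Bar{E}$ and $\hat{E}$ are Polish, and in particular metrizable. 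One then checks that $\hat{w}$ is continuous on $[-1,\infty)$ (it is constant on $[-1,0]$, equals $w$ on $[0,\infty)$, and the two pieces agree at $0$) and that $\check{l}$ is nondecreasing and right-continuous, so $\check{l}\in D([-1,\infty))$; thus $\iota_{\alpha}$ is well defined.

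For continuity, since $\hat{E}$ is metrizable it suffices to prove sequential continuity. Let $(w^n,l^n,z^n)\to(w,l,z)$ in $\hat{E}$; the convergence $z^n\to z$ is trivial, so the task is to show $\hat{w}^n-\alpha\check{l}^n\to\hat{w}-\alpha\check{l}$ in $(D([-1,\infty)),M_1)$. I would fix a continuity point $T$ of $l$ (these are co-countable, hence dense in $[0,\infty)$), prove convergence of the restrictions in $(D([-1,T]),M_1)$, and then invoke the standard characterisation of the $M_1$ topology on the half-line to conclude on $D([-1,\infty))$. On $[-1,T]$ there are three ingredients. First, $\hat{w}^n\to\hat{w}$ uniformly (because $w^n\to w$ uniformly on $[0,T]$ and $w^n_0\to w_0$), and uniform convergence is stronger than $M_1$. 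Second, $\check{l}^n\to\check{l}$ in $M_1$: Levy convergence of $l^n$ to $l$ is weak convergence of the associated measures, equivalently $l^n_t\to l_t$ at every continuity point $t$ of $l$; hence the nondecreasing functions $\check{l}^n$ converge to the nondecreasing $\check{l}$ pointwise on a dense subset of $[-1,T]$ containing both endpoints (at $-1$ both are $0$; at $T$ this is exactly the choice of $T$), and a standard fact about monotone functions then gives $M_1$ convergence on $D([-1,T])$ (see \cite{Whitt}, Chapter~12); multiplying by the constant $-\alpha$ is an $M_1$ homeomorphism. Third, since $\hat{w}$ is continuous it shares no discontinuity point with $-\alpha\check{l}$, so addition is continuous at the pair $(\hat{w},-\alpha\check{l})$ in $(D([-1,T]),M_1)$; this can be taken from the continuity of $M_1$ addition at pairs without common jumps (\cite{Whitt}, Chapter~12), or shown directly: if $(r^n,u^n)$ is a parametric representation of $\check{l}^n$ converging uniformly to one $(r,u)$ of $\check{l}$, then $\big(r^n,\ \hat{w}^n\circ r^n-\alpha u^n\big)$ is a parametric representation of $\hat{w}^n-\alpha\check{l}^n$ (here continuity of $\hat{w}^n$ ensures no new jumps appear and that the parametric monotonicity is preserved), and it converges uniformly to $\big(r,\ \hat{w}\circ r-\alpha u\big)$ because $\hat{w}^n\to\hat{w}$ uniformly and $\hat{w}$ is uniformly continuous on $[-1,T]$. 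Combining the three ingredients yields $\hat{w}^n-\alpha\check{l}^n\to\hat{w}-\alpha\check{l}$ on $[-1,T]$, hence on $[-1,\infty)$, and therefore $\iota_{\alpha}$ is continuous.

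I expect the only genuine obstacles to be the second and third ingredients. For the second, the point to get right is that weak convergence of the loss measures really does force $M_1$ convergence of their monotone extensions, even though it does not force pointwise convergence at atoms of $l$: this is precisely the phenomenon $M_1$ is designed to tolerate, namely a jump of $l$ moving in time, and it is also why $M_1$ rather than $J_1$ is the correct topology here, since $\check{l}$ may well jump at $0$ when $l$ has an atom there. For the third, the essential input is that $M_1$ addition is continuous at pairs of paths with disjoint discontinuity sets, a hypothesis available only because the Brownian path $\hat{w}$ is continuous; were $w$ permitted to jump at the same times as $l$, continuity of $\iota_{\alpha}$ would fail. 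Everything else — Polishness, the endpoint bookkeeping, the passage from $[-1,T]$ to $[-1,\infty)$, and the rescaling by $-\alpha$ — is routine.
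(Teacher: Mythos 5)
Your proof is correct and follows the same route as the paper, whose own argument simply delegates the continuity of $(w,l)\mapsto\hat{w}-\alpha\check{l}$ to \cite{Cuchiero2020}, Theorem~4.2; you have supplied the details that citation encapsulates. The two ingredients you isolate --- that Levy (weak) convergence of $l^n$ gives pointwise convergence at continuity points and hence $M_1$ convergence of the monotone extensions $\check{l}^n$, and that $M_1$ addition is continuous at pairs with disjoint jump sets, applicable here precisely because $\hat{w}$ is continuous --- are exactly what the cited result rests on.
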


\begin{proof}
The space $\hat{E}$ is Polish as the product space of Polish spaces. The fact that the map $(w,l)\mapsto (\hat{w}-\alpha \check{l})$ is a continuous map from $\Bar{E}$ to $D([-1,\infty))$ then follows as in the proof of \cite{Cuchiero2020} Theorem 4.2. As a consequence $\iota_{\alpha}$ is a continuous map.
\end{proof}


\begin{theorem} \label{TightnessTheorem}
Let $(X^N)_{N \in \N}$ be a sequence of stochastic processes with paths in $D([0,\infty))$. Suppose that $X^N$ is of the form

\begin{equation*}
    X^N = \Theta(Y^N) =\Theta(Z^N - \alpha_N \Lambda^n)  
\end{equation*}
\noindent
with $\alpha_N$ a real number, $Z^N$ continuous and $\Lambda^N \in M$. Suppose that $(Z^N)_{N \in \N}$ is tight on $C([0,T])$ for each $T>0$ and $(\alpha_N)_{N \in \N}$ is tight on $\R$. Furthermore, let $(\xi^N)_{N \in \N}$ be a tight sequence of random variables on $\R$. Following the notation from Theorem \ref{MinusOneExtension} we denote by $\hat{Y}^N$ the extension of $Y^N$ to $D([-1,\infty))$ and set $\hat{X}^N =\Theta(\hat{Y}^N)$. 
Then the random variables $((Z^N,\Lambda^N,\xi^N))_{N \in \N}$ are tight on $\hat{E}$, the random variables $((\hat{Y}^N,\xi^N))_{N \in \N}$ are tight on $D([-1,\infty)) \times \R$  and the random variables $(\hat{X}^N)_N$ are tight on $D([-1,\infty))$.
\end{theorem}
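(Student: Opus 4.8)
The plan is to establish the three tightness claims in sequence, deducing each from the previous one by pushing forward through a continuous map and invoking the continuous mapping theorem together with closedness/continuity facts already set up in Theorem \ref{MinusOneExtension}.

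First I would prove tightness of $((Z^N,\Lambda^N,\xi^N))_N$ on $\hat E$. Since $\hat E = C([0,\infty)) \times M \times \R$ carries the product topology, and since a sequence of laws on a product of Polish spaces is tight precisely when each coordinate family is tight, this reduces to the three hypotheses. The family $(Z^N)_N$ is tight on $C([0,\infty))$ because it is assumed tight on $C([0,T])$ for every $T$, and tightness on the projective-limit space $C([0,\infty))$ (with the topology of compact convergence) follows from tightness on each $C([0,T])$ by a standard diagonal argument: pick compacts $K_T \subseteq C([0,T])$ with $\prob(Z^N|_{[0,T]} \notin K_T) < \varepsilon 2^{-T}$ and intersect their preimages. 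The family $(\Lambda^N)_N$ is automatically tight because, as noted just before the theorem, $M$ equipped with the Levy metric is a \emph{compact} Polish space, so the whole space is a compact set. Finally $(\xi^N)_N$ is tight on $\R$ by hypothesis. Hence the product family is tight on $\hat E$.

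Next I would transfer this to $(D([-1,\infty)) \times \R)$. By Theorem \ref{MinusOneExtension} the embedding $\iota_{\alpha}: \hat E \to D([-1,\infty)) \times \R$, $(w,l,z) \mapsto (\hat w - \alpha \check l, z)$, is continuous for every fixed $\alpha$; but here the coefficient $\alpha_N$ is itself random and only tight, so I would instead work with the map $\Phi: \hat E \times \R \to D([-1,\infty)) \times \R$, $\Phi((w,l,z),a) = (\hat w - a\check l, z)$, and check that $\Phi$ is jointly continuous (this is the natural extension of the argument in \cite{Cuchiero2020} Theorem 4.2, using that $l \mapsto \check l$ lands in $M$ which is bounded in sup-norm, so scaling by $a$ is continuous in $(a,l)$ jointly). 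Then $(\hat Y^N, \xi^N) = \Phi((Z^N,\Lambda^N,\xi^N), \alpha_N)$, and since $((Z^N,\Lambda^N,\xi^N),\alpha_N)$ is tight on $\hat E \times \R$ (product of two tight families), its image under the continuous map $\Phi$ is tight on $D([-1,\infty)) \times \R$ by the continuous mapping theorem for tightness (Prohorov plus Skorokhod, or directly: continuous image of a compact is compact). This gives the second claim, and the first component alone gives tightness of $(\hat Y^N)_N$ on $D([-1,\infty))$.

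Finally, tightness of $(\hat X^N)_N = (\Theta(\hat Y^N))_N$ follows by one more application of the continuous mapping theorem, using that the Skorokhod reflection map $\Theta: D([-1,\infty)) \to D([-1,\infty))$ is continuous in the M1 topology (\cite{Whitt} Theorem 13.5.1, as invoked in the text). I expect the main obstacle to be the second step: one must be careful that the M1 topology on $D([-1,\infty))$ interacts correctly with the extension operations $w \mapsto \hat w$, $l \mapsto \check l$ and with the random scaling $\alpha_N$ — in particular that the concatenation at $t=0$ does not create spurious oscillations in the M1 sense, and that joint (not just separate) continuity in $(l,a)$ holds. Everything else is either a direct appeal to the compactness of $M$, a routine diagonal argument for passing from $[0,T]$ to $[0,\infty)$, or the cited continuity of $\Theta$.
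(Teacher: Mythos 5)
Your argument is correct and follows essentially the same route as the paper, the only difference being that the paper's own proof is shorter: it simply cites \cite{Cuchiero2020} Theorem~4.4 to obtain tightness of $((Z^N,\Lambda^N))_N$ on $\bar E$ and of $(Y^N)_N$ on $D([-1,\infty))$, then appends the $\xi^N$ coordinate by the product argument and invokes continuity of $\Theta$ for the last claim. What you have done is unpack that citation---coordinatewise tightness via compactness of $M$, the diagonal argument to pass from $C([0,T])$ to $C([0,\infty))$, and continuity of the extension map---and your treatment of the (possibly random) coefficient $\alpha_N$ via the jointly continuous map $\Phi$ is a sound and slightly more careful version of what is implicitly used.
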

\begin{proof}
By \cite{Cuchiero2020} Theorem 4.4 the random variables $((Z^N,\Lambda^N))_{N \in \N}$ are tight on $\bar{E}$ and the random variables $(Y^N)_{N \in \N}$ are tight on $D([-1,\infty)$. Since $(\xi^N)_{N \in \N}$ are tight on $\R$ by assumption it follows that $((Z^N,\Lambda^N,\xi^N))_{N \in \N}$ are tight on $\hat{E}$ and  $((\Hat{Y}^N,\xi^N))_{N \in \N}$ are tight on $D([-1,\infty)) \times \R$. By continuity of the Skorokhod map it then follows that the random variables $(\hat{X}^N)_{N \in \N}$ are tight on $D([-1,\infty))$.
\end{proof}
In the rest of this article we will no longer distinguish between processes on $D([0,\infty))$ and their extensions to $D([-1,\infty))$.
Theorem \ref{TightnessTheorem} yields tightness of the processes we consider, but we need tightness for the corresponding empirical measures. By the following result of \cite{Snitzman}, if we have exchangeability, then this tightness follows from the tightness of a sequence of particles. 


\begin{proposition} [\cite{Snitzman}, Proposition 2.2.]\label{SnitzmanProp}
Let $E$ be a Polish space and let $X^N\coloneqq (X^{1,N}, \ldots, X^{N,N})$ be $N$-exchangeable on $E^N$ for every $N \in \N$, then the $\mathcal{P}(E)$-valued random variables $(\mu_N)_{N \in \N}$ given by
\begin{equation*}
    \mu_N = \frac{1}{N} \sum_{i=1}^N \delta_{X^{i,N}}
\end{equation*}
are tight if and only if $(X^{1,N})_{N \in \N}$ is tight on $E$.
\end{proposition}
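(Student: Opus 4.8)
The plan is to reduce both implications to Prokhorov's theorem, the bridge being the elementary observation that, by $N$-exchangeability, the law of the single particle $X^{1,N}$ is exactly the mean (intensity) measure of the random empirical measure $\mu_N$: for every Borel set $A\subseteq E$,
\[
\mathbb{P}(X^{1,N}\in A)=\frac1N\sum_{i=1}^N\mathbb{P}(X^{i,N}\in A)=\mathbb{E}[\mu_N(A)].
\]
Here I recall that since $E$ is Polish so is $\mathcal{P}(E)$, so Prokhorov applies on both spaces, and that tightness of the $\mathcal{P}(E)$-valued random variables $(\mu_N)_N$ means tightness of their laws, i.e. of the induced family in $\mathcal{P}(\mathcal{P}(E))$.

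For the direction ``$(\mu_N)_N$ tight $\Rightarrow$ $(X^{1,N})_N$ tight'' I would argue as follows. Fix $\varepsilon>0$. By Prokhorov on $\mathcal{P}(E)$ there is a compact set $\mathcal{K}\subseteq\mathcal{P}(E)$ with $\sup_N\mathbb{P}(\mu_N\notin\mathcal{K})\leq\varepsilon/2$; because a compact subset of $\mathcal{P}(E)$ is a uniformly tight family of measures, there is a compact $K\subseteq E$ with $\sup_{\mu\in\mathcal{K}}\mu(E\setminus K)\leq\varepsilon/2$. Splitting according to whether $\mu_N\in\mathcal{K}$ and using $\mu_N(E\setminus K)\leq1$ gives, uniformly in $N$,
\[
\mathbb{P}(X^{1,N}\notin K)=\mathbb{E}[\mu_N(E\setminus K)]\leq\tfrac{\varepsilon}{2}+\mathbb{P}(\mu_N\notin\mathcal{K})\leq\varepsilon,
\]
which is the desired tightness.

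For the converse ``$(X^{1,N})_N$ tight $\Rightarrow$ $(\mu_N)_N$ tight'', fix $\varepsilon>0$ and, for each $k\in\mathbb{N}$, use tightness to pick a compact $K_k\subseteq E$ with $\sup_N\mathbb{P}(X^{1,N}\notin K_k)\leq\varepsilon\,2^{-k}/k$. Markov's inequality and the mean-measure identity then yield
\[
\mathbb{P}\bigl(\mu_N(E\setminus K_k)>1/k\bigr)\leq k\,\mathbb{E}[\mu_N(E\setminus K_k)]=k\,\mathbb{P}(X^{1,N}\notin K_k)\leq\varepsilon\,2^{-k}.
\]
The set $\mathcal{K}:=\{\mu\in\mathcal{P}(E):\mu(E\setminus K_k)\leq1/k\text{ for all }k\in\mathbb{N}\}$ is a uniformly tight family of probability measures, hence relatively compact by Prokhorov, and it is closed because $\mu\mapsto\mu(E\setminus K_k)$ is upper semicontinuous on $\mathcal{P}(E)$; thus $\mathcal{K}$ is compact. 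Summing the bounds above over $k$ gives $\sup_N\mathbb{P}(\mu_N\notin\mathcal{K})\leq\sum_{k=1}^{\infty}\varepsilon\,2^{-k}=\varepsilon$, which proves tightness of $(\mu_N)_N$.

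There is no serious obstacle here once the mean-measure identity is isolated: the proof is little more than Prokhorov plus a Markov--Borel--Cantelli-type bookkeeping. The points I would be careful about are the equivalence (via Prokhorov) between compactness in $\mathcal{P}(E)$ and uniform tightness, used in both directions, the choice of the summable thresholds $\varepsilon\,2^{-k}/k$ in the converse, and checking that the candidate set $\mathcal{K}$ in the converse is genuinely closed in the topology of weak convergence, which comes from upper semicontinuity of $\mu\mapsto\mu(F)$ for closed $F\subseteq E$.
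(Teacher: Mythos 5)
The paper does not supply its own proof here: it quotes this as Proposition~2.2 of Sznitman's lecture notes and uses it as a black box. Your proof is the standard one and, modulo one small terminological slip, it is correct and would serve as a sound stand-in for the citation. The key observation (by exchangeability, $\mathrm{law}(X^{1,N})$ is the intensity measure of $\mu_N$), the Prokhorov-and-splitting argument for the forward direction, and the Markov-plus-diagonal choice of thresholds $\varepsilon 2^{-k}/k$ for the converse are exactly what Sznitman does.

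The slip: you assert that $\mu\mapsto\mu(E\setminus K_k)$ is \emph{upper} semicontinuous. That is backwards. Since $E\setminus K_k$ is open, the Portmanteau theorem gives \emph{lower} semicontinuity of $\mu\mapsto\mu(E\setminus K_k)$, which is what you actually want: sublevel sets $\{\mu:\mu(E\setminus K_k)\leq 1/k\}$ of a lower semicontinuous function are closed. Equivalently, rewrite the constraint as $\mu(K_k)\geq 1-1/k$ and invoke upper semicontinuity of $\mu\mapsto\mu(K_k)$ for the closed set $K_k$ (this is the fact you mention in your closing remark), whose superlevel sets are closed. Either reformulation fixes the argument; the conclusion that $\mathcal{K}$ is closed, hence compact, stands. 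You could also sidestep closedness entirely by noting that relative compactness of $\mathcal{K}$ already gives a compact set $\overline{\mathcal{K}}$ with $\sup_N\mathbb{P}(\mu_N\notin\overline{\mathcal{K}})\leq\varepsilon$.
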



\begin{corollary} \label{empiricalMeasureTightness}
Suppose that $X^N$ has dynamics
\begin{equation*}
    X_t^{i,N} = \Theta(Y^{i,N}) = \Theta(X_{0-}^{i,N} + B_t^i - \alpha \Lambda_t^N)
\end{equation*}
where $\alpha >0$, $(X_{0-}^N)_{N \in \N}$ and $(\xi^{i,N})_{N \in N}$ are $N$-exchangeable random vectors, $(B_i)_{i \in \N}$ independent Brownian motions, and $\Lambda^N \in M$. If $(X_{0-}^{1,N})_{N \in \N}$ and $(\xi^{1,N})_{N \in \N}$ are tight on $\R$, then the empirical measures
\begin{equation} \label{EmpiricalMeasuresDefn}
    \mu_N^{\Theta}=\frac{1}{N} \sum_{i=1}^N \delta_{X^{i,N}}  \quad
    \mu_N = \frac{1}{N} \sum_{i=1}^N \delta_{(Y^{i,N},\xi^{i,N})}, \quad \text{ and } \quad \zeta_N = \frac{1}{N} \sum_{i=1}^N \delta_{(X_{0-}^{i,N}+B^i,\Lambda^N,\xi^{i,N})}
\end{equation}
are tight on $\mathcal{P}(D([-1,\infty)))$, $\mathcal{P}(D([-1,\infty)) \times \R)$, and $\mathcal{P}(\hat{E})$ respectively.
\end{corollary}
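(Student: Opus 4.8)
The plan is to derive \Cref{empiricalMeasureTightness} from \Cref{TightnessTheorem} and \Cref{SnitzmanProp} by checking that the two hypotheses of Sznitman's proposition --- exchangeability and tightness of a single coordinate --- hold for each of the three empirical measures. First I would observe that the driving data $((X_{0-}^{i,N}, B^i, \xi^{i,N}))_{i=1}^N$ is $N$-exchangeable: the $B^i$ are i.i.d.\ (hence exchangeable) and independent of the pair $(X_{0-}^N, \xi^N)$, which is $N$-exchangeable by assumption, so the triple is $N$-exchangeable. Since $\Lambda^N$ is a \emph{deterministic} function of the empirical configuration (it is determined symmetrically through the fixed-point/Skorokhod structure), the vectors $(X_{0-}^{i,N}+B^i, \Lambda^N, \xi^{i,N})_{i=1}^N$, and likewise $(Y^{i,N},\xi^{i,N})_{i=1}^N$ with $Y^{i,N}=X_{0-}^{i,N}+B^i-\alpha\Lambda^N$ and $(X^{i,N})_{i=1}^N=(\Theta(Y^{i,N}))_{i=1}^N$, are all $N$-exchangeable as images of an exchangeable vector under a permutation-equivariant map.

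Next I would establish tightness of the single first coordinate in each case. Applying \Cref{TightnessTheorem} with $Z^N := X_{0-}^{1,N}+B^1$ (tight on $C([0,T])$ for each $T$, since $X_{0-}^{1,N}$ is tight on $\R$ and $B^1$ is a fixed Brownian motion), $\alpha_N \equiv \alpha$ (trivially tight), and $\xi^N := \xi^{1,N}$ (tight by hypothesis), yields tightness of $(Z^N,\Lambda^N,\xi^{1,N})_N$ on $\hat E$, of $(\hat Y^{1,N},\xi^{1,N})_N$ on $D([-1,\infty))\times\R$, and of $(\hat X^{1,N})_N$ on $D([-1,\infty))$. These are precisely the first-coordinate marginals of $\zeta_N$, $\mu_N$, and $\mu_N^\Theta$ respectively. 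Then \Cref{SnitzmanProp}, applied with $E=\hat E$, $E = D([-1,\infty))\times\R$, and $E = D([-1,\infty))$ in turn (each a Polish space --- the first by \Cref{MinusOneExtension}, the others as standard Skorokhod spaces and their products), gives tightness of $\zeta_N$ on $\mathcal{P}(\hat E)$, of $\mu_N$ on $\mathcal{P}(D([-1,\infty))\times\R)$, and of $\mu_N^\Theta$ on $\mathcal{P}(D([-1,\infty)))$.

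The only genuinely delicate point is the claim that $\Lambda^N$ is a symmetric (permutation-invariant) functional of the particle data, so that the three vectors above are truly exchangeable rather than merely having exchangeable marginals; this is what lets one invoke \Cref{SnitzmanProp}. I would justify it by recalling that $\Lambda^N$ is the loss function of the (absorbing reformulation of the) particle system \eqref{particleSystem}, and $\Lambda_t^N = \frac1N\sum_i \mathbbm 1_{\tau_{i,N}\le t}$ with $\tau_{i,N}$ depending only on the unordered collection of trajectories $(X_{0-}^i+B^i-\alpha\Lambda^N, \xi^i)$; since relabelling the particles leaves this empirical quantity unchanged, $\Lambda^N$ is invariant under permutations and the joint vector $(X_{0-}^{i,N}+B^i,\Lambda^N,\xi^{i,N})_i$ inherits exchangeability from $((X_{0-}^{i,N}+B^i,\xi^{i,N}))_i$. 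Everything else is a bookkeeping matter of matching coordinates and citing the two imported results, so I would keep the write-up short, emphasising the exchangeability verification and the identification of first-coordinate marginals with the objects appearing in \Cref{TightnessTheorem}.
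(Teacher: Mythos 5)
Your proof is correct and follows essentially the same approach as the paper: check the hypotheses of \Cref{TightnessTheorem} for the first coordinate (with $Z^N=X_{0-}^{1,N}+B^1$, $\alpha_N\equiv\alpha$, $\xi^N=\xi^{1,N}$) and then invoke \Cref{SnitzmanProp}. The paper's own proof is just two sentences; your write-up supplies the exchangeability bookkeeping --- in particular the observation that $\Lambda^N$ is a permutation-invariant functional of the particle data, which the paper leaves implicit --- and the identification of the first-coordinate marginals with the objects in \Cref{TightnessTheorem}.
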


\begin{proof}
Since $(X_{0-}^{1,N})_{N\in \N}$ is tight on $\R$ it easily follows that $(X_{0-}^{1,N}+B^1)_{N \in \N}$ is tight on $C[0,T]$ for all $T>0$. Thus, the conditions of Theorem \ref{TightnessTheorem} are satisfied and we can combine this with Proposition \ref{SnitzmanProp} to finish the proof.
\end{proof}

We will write $f(\mu)$ for the pushforward of a measure $\mu$ by a map $f$.

\begin{corollary} 
Let $(\mu^{\Theta}_N)_{N \in \N}$ and $(\mu_N)_{N \in \N}$ be as in the Corollary \ref{empiricalMeasureTightness}. Then there are $\mathcal{P}(\hat{E})$-valued random variables $\zeta, \zeta_N$ such that, at least for a subsequence, $\mbox{law}(\mu_N)= \mbox{law}(\iota_{\alpha}(\zeta_N)), \zeta_N \to \zeta$ and $\iota_{\alpha}(\zeta_N) \to \iota_{\alpha}(\zeta)$ a.s. It holds that $\mbox{law}(\mu_N^{\Theta})= \mbox{law}(\Theta(\mu_N))$ and $\Theta(\mu_N) \to \Theta(\iota_{\alpha}(\zeta))$ a.s. Moreover,
\begin{equation*}
    \mbox{law}(\zeta_N) = \mbox{law} \left( \frac{1}{N} \sum_{i=1}^N\delta_{(X_{0-}^{i,N}+B^i, \Lambda^N,\xi^{i,N})}\right).
\end{equation*}
\end{corollary}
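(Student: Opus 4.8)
The plan is to build the limiting random measures by passing to a subsequence and invoking the tightness already established, then identify the pushforward structure using the continuity of the embedding maps from Theorem~\ref{MinusOneExtension}. First I would apply Corollary~\ref{empiricalMeasureTightness}: the sequence $(\zeta_N)_{N\in\N}$, where $\zeta_N=\frac1N\sum_{i=1}^N\delta_{(X_{0-}^{i,N}+B^i,\Lambda^N,\xi^{i,N})}$, is tight on $\mathcal{P}(\hat E)$, so along a subsequence it converges in law to some $\mathcal{P}(\hat E)$-valued random variable $\zeta$. Invoking the Skorokhod representation theorem on a suitable probability space, I may assume this convergence and all the convergences below hold almost surely, which is what the statement asserts. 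The equality of laws $\mbox{law}(\zeta_N)=\mbox{law}\bigl(\frac1N\sum_{i=1}^N\delta_{(X_{0-}^{i,N}+B^i,\Lambda^N,\xi^{i,N})}\bigr)$ is then immediate since we simply defined $\zeta_N$ this way; I would phrase this carefully so that the re-constructed $\zeta_N$ on the new space retains this representation (this is part of what Skorokhod representation delivers).

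Next I would transfer the convergence along $\iota_\alpha$ and $\Theta$. Since $\iota_\alpha:\hat E\to D([-1,\infty))\times\R$ is continuous by Theorem~\ref{MinusOneExtension}, the induced pushforward map on $\mathcal{P}(\hat E)\to\mathcal{P}(D([-1,\infty))\times\R)$ is continuous for the weak topology, so $\iota_\alpha(\zeta_N)\to\iota_\alpha(\zeta)$ a.s. Unwinding the definition of $\iota_\alpha$ and of $\zeta_N$, the pushforward $\iota_\alpha(\zeta_N)$ is exactly $\frac1N\sum_{i=1}^N\delta_{(\widehat{X_{0-}^{i,N}+B^i}-\alpha\check\Lambda^N,\xi^{i,N})}=\frac1N\sum_{i=1}^N\delta_{(Y^{i,N},\xi^{i,N})}=\mu_N$ (using the extension conventions and $Y^{i,N}=X_{0-}^{i,N}+B^i-\alpha\Lambda^N$), which gives $\mbox{law}(\mu_N)=\mbox{law}(\iota_\alpha(\zeta_N))$. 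Similarly, since the Skorokhod map $\Theta$ is continuous on $D([-1,\infty))$ with the M1 topology (\cite{Whitt} Theorem~13.5.1), the map it induces on probability measures (acting on the first coordinate, i.e. $\Theta\colon\mathcal{P}(D([-1,\infty))\times\R)\to\mathcal{P}(D([-1,\infty)))$, or directly on $\mathcal{P}(D([-1,\infty)))$) is weakly continuous; applying it to $\mu_N$ and noting $\Theta(Y^{i,N})=X^{i,N}$ yields $\mbox{law}(\mu_N^\Theta)=\mbox{law}(\Theta(\mu_N))$ and $\Theta(\mu_N)\to\Theta(\iota_\alpha(\zeta))$ a.s.

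The main obstacle I expect is bookkeeping rather than a deep difficulty: one must be careful that "pushforward by a continuous map is continuous on $\mathcal{P}(E)$" is being used correctly, namely that $f_*:\mathcal{P}(E)\to\mathcal{P}(F)$ is continuous for the topology of weak convergence whenever $f:E\to F$ is continuous between Polish spaces, and that the empirical measure map commutes with pushforward, i.e. $f_*\bigl(\frac1N\sum\delta_{x_i}\bigr)=\frac1N\sum\delta_{f(x_i)}$. A secondary subtlety is that $\Theta$ only acts on the path coordinate while $\iota_\alpha(\zeta)$ lives in $D([-1,\infty))\times\R$, so one should either work with $\Theta\otimes\mathrm{id}$ or project onto the first coordinate before applying $\Theta$; either way the continuity needed is inherited from continuity of $\Theta$ and of the projection. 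Finally, the Skorokhod representation step requires the limit spaces to be Polish, which holds: $\mathcal{P}(\hat E)$ is Polish because $\hat E$ is Polish by Theorem~\ref{MinusOneExtension}, and $\mathcal{P}(D([-1,\infty)))$ is Polish since $D([-1,\infty))$ with the M1 topology is Polish. Assembling these pieces gives the claimed almost sure convergences and equalities of laws for the stated (common) subsequence.
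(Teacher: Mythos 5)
Your argument is correct and follows exactly the route the paper takes: tightness of $(\zeta_N)$ from Corollary~\ref{empiricalMeasureTightness}, a subsequence extraction plus Skorokhod representation to upgrade to almost sure convergence, and then continuity of $\iota_\alpha$ (Theorem~\ref{MinusOneExtension}) and of $\Theta$ (\cite{Whitt} Theorem~13.5.1) to transport that convergence and the law identities to $\mu_N$ and $\mu_N^\Theta$. The paper gives only a one-line proof citing precisely these three ingredients, and your expansion fills in the same bookkeeping (pushforward of empirical measures under continuous maps, the $\Theta\otimes\mathrm{id}$ caveat) that the paper leaves implicit.
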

\begin{proof}
The result follows by combining Corollary \ref{empiricalMeasureTightness}, the Skorokhod representation theorem and the continuity of the maps involved.
\end{proof}

\subsection{Convergence of Solutions}

We consider the setting of Corollary \ref{empiricalMeasureTightness}. The three empirical measures given in (\ref{EmpiricalMeasuresDefn}) have the following relations
\begin{equation*}
    \mbox{law}(\mu_N) = \mbox{law}(\iota_{\alpha}(\zeta_N)); \quad \mbox{law}(\mu_N^{\Theta})= \mbox{law}(\Theta(\mu_N)).
\end{equation*}

\begin{defn}
For $t \in \R$, $x \in D([-1,\infty))$ and $z \in \R$ define the following path functionals
\begin{equation*}
    \tau_0(x,z) \coloneqq \inf \{s \geq 0 : x_s \leq - z \} \quad \text{and} \quad \lambda_t(x,z) \coloneqq \mathbbm{1}_{\tau_0(x,z) \leq t}
\end{equation*}
\end{defn}

\begin{lemma} \label{IndicatorContinuity}
Assume that $(x^n,z^n) \to (x,z)$ in $(D([-1,\infty)) \times \R)$ with $(x^n,z^n),(x,z) \in \iota_{\alpha}(\hat{E})$ and $x$ satisfies the crossing property 
\begin{equation*}
    \inf_{0 \leq s \leq h} (x_{\tau(x,z)+s}-x_{\tau(x,z)})<0, \quad h>0.
\end{equation*}
Then we have that
\begin{equation*}
    \lim_{n \to \infty} \lambda_t(x^n,z^n) = \lambda_t(x,z)
\end{equation*}
for all $t \in [0,\infty)$ in a co-countable set.
\end{lemma}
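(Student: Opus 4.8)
The plan is to fix $t$ in the co-countable set where $\lambda_{\cdot}(x,z)$ is continuous at $t$ — equivalently, where $\tau(x,z) \neq t$ — and show $\lambda_t(x^n,z^n) \to \lambda_t(x,z)$ there. The argument splits into the two cases $t < \tau(x,z)$ and $t > \tau(x,z)$, and in each case it suffices to show $\mathbbm{1}_{\tau(x^n,z^n) \leq t}$ eventually equals $\mathbbm{1}_{\tau(x,z) \leq t}$. The essential tool is that M1-convergence $x^n \to x$ in $D([-1,\infty))$, together with the sign/crossing structure at the hitting time, forces $\tau(x^n,z^n) \to \tau(x,z)$, which I would isolate as the core claim; once that limit is established, picking $n$ large enough so that $|\tau(x^n,z^n) - \tau(x,z)| < |t - \tau(x,z)|$ finishes both cases. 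Note also $z^n \to z$ in $\R$, so the shifted level $-z^n \to -z$, and I can absorb this by working with $\tilde{x}^n_s := x^n_s + z^n$ and $\tilde{x}_s := x_s + z$, reducing to hitting times of level $0$; M1-convergence is preserved under adding a convergent constant.

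First I would handle the easy upper semicontinuity direction: if $\tau(x,z) < \infty$ and $t > \tau(x,z)$, the crossing property says that for every small $h>0$ there is $s \in (\tau(x,z), \tau(x,z)+h)$ with $\tilde{x}_s < 0$ strictly. Fix such an $s \leq t$ with $s$ a continuity point of $x$ (possible since $x$ has at most countably many jumps); M1-convergence implies pointwise convergence $\tilde{x}^n_s \to \tilde{x}_s < 0$ at continuity points, hence $\tilde{x}^n_s < 0$ for large $n$, so $\tau(x^n,z^n) \leq s \leq t$, giving $\lambda_t(x^n,z^n) = 1 = \lambda_t(x,z)$ eventually. For the other direction, suppose $t < \tau(x,z)$ (including $\tau(x,z) = \infty$). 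Here I need that the $x^n$ cannot hit $0$ much earlier than $x$ does; this is where a little care with M1 is needed, since M1-convergence does not give uniform closeness of values, but it does give closeness of completed graphs. Concretely, on the compact time interval $[-1, t]$ (or $[-1, s]$ for any $s<\tau(x,z)$), $\inf_{[-1,s]} \tilde{x} > 0$ — this uses that $x \in \iota_\alpha(\hat E)$ so $x$ restricted to $[-1,0)$ is a constant equal to a continuous-function value and is $\geq$ some positive number built from $X_{0-}+\xi$, and that $\tilde{x}$ has not yet hit $0$; I would argue $\inf_{[-1,s]}\tilde x =: \delta > 0$ via lower semicontinuity of the infimum of a cadlag path that stays strictly positive on a closed interval. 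Then M1-convergence on $[-1,s]$ forces $\liminf_n \inf_{[-1,s]} \tilde{x}^n \geq \delta/2 > 0$ (the graph-closeness rules out the $n$-th path dipping to $0$), so $\tau(x^n,z^n) > s > t$ for large $n$, giving $\lambda_t(x^n,z^n) = 0 = \lambda_t(x,z)$.

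The main obstacle is precisely the lower bound $\liminf_n \inf_{[-1,s]} \tilde x^n > 0$ under M1-convergence: the M1-topology permits the approximating paths to undershoot (oscillate below the level of $x$) during a jump of $x$, so one must check that such undershoots are controlled. The clean way is to note that on $[-1,s]$ the limit path $\tilde x$ is bounded below by $\delta>0$, so its completed graph lies in the closed region $\{y \geq \delta\} \times [-1,s]$; M1-convergence means the completed graphs converge in the Hausdorff metric, hence for large $n$ the graph of $\tilde x^n$ lies within distance $\delta/2$ of this region, forcing $\tilde x^n \geq \delta/2$ on $[-1,s]$. I would also remark that the co-countable set in the statement is exactly $\{t : \tau(x,z) \neq t\}$ together with the (at most countably many) discontinuity points of $x$ that one wants to avoid when invoking pointwise M1-convergence — and since there are only countably many such bad $t$, the conclusion holds on a co-countable set as claimed. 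Finally, I should record that the crossing property is only used in the $t > \tau(x,z)$ case; without it the limit path could jump across $0$ and immediately return, and no $x^n$ need register a crossing before time $t$.
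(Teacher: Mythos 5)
Your proof follows the same route as the paper: you first absorb the level $z$ into the path via $\tilde{x} := x + z$, which is exactly the paper's map $\Xi$, reducing to hitting times of level zero, and you then spell out the standard M1 argument (crossing property for eventual hitting when $t > \tau(x,z)$, Hausdorff graph convergence to rule out early hitting when $t < \tau(x,z)$) that the paper delegates to \cite{Cuchiero2020}, Lemma~5.4. So the approach is essentially identical; you have simply inlined the cited argument instead of invoking it.

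One imprecision worth tightening: the assertion that $\inf_{[-1,s]} \tilde{x} > 0$ follows merely from $\tilde{x}$ being strictly positive and cadlag on a compact interval is false in general --- a strictly positive cadlag path can have infimum zero through a vanishing left limit, e.g.\ $f_r = 1-r$ on $[0,1)$ with $f_1 = 1$. What actually makes your argument work is the structure of $\iota_\alpha(\hat{E})$: its elements are of the form $\hat{w} - \alpha \check{l}$ with $w$ continuous and $l$ increasing, so $\tilde{x}$ has only downward jumps, $\tilde{x}_{r-} \geq \tilde{x}_r$ for all $r$, hence $\tilde{x}$ is lower semicontinuous, the infimum over $[-1,s]$ is attained, and it is strictly positive. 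This downward-jump property is also precisely what guarantees the completed graph of $\tilde{x}$ (including the vertical segments filling the jumps) lies in $\{y \geq \delta\}$, which your Hausdorff-convergence step implicitly relies on; the structure deserves to be stated explicitly rather than attributed to a general feature of cadlag paths.
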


\begin{proof}
Define the map 
\begin{equation*}
    \Xi: (D([-1,\infty)) \times \R) \to D([-1,\infty)); (x,z) \mapsto x+z.
\end{equation*}
This map is continuous with respect to the $M1$-topology and the function $\Xi(x,z)$ satisfies the crossing property. We have
\begin{equation*}
    \lambda_t(x,z) = \lambda_t(\Xi(x,z),0).
\end{equation*}
The convergence then follows as in the proof of \cite{Cuchiero2020} Lemma 5.4.
\end{proof}

\begin{lemma} \label{LossConvergenceLemma}
Assume that $(\zeta^n)_{n \in \N}$ is a convergent sequence of probability measures on $\hat{E}$ with limit $\zeta$. Define $\mu^n \coloneqq \iota_{\alpha}(\zeta^n)$ and $\mu \coloneqq \iota_{\alpha}(\zeta)$. If $\mu$-a.e path satisfies the crossing property
\begin{equation} \label{crossingPropertyMeasure}
    \mu(\{(x,z) \in D([-1,\infty)) \times \R: \inf_{0 \leq s \leq h}(x_{\tau_0(x,z)+s}-x_{\tau_0(x,z)})=0 \})=0, \quad h >0,
\end{equation}
then $\lim_{n \to \infty} \langle \mu^n, \lambda \rangle = \langle \mu, \lambda \rangle$ in $M$.
\end{lemma}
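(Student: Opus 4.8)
The plan is to reduce the claim to an almost-sure statement via the Skorokhod representation theorem, transport the pathwise continuity of $\lambda_t$ supplied by Lemma~\ref{IndicatorContinuity} to convergence of the scalar averages $\langle\mu^n,\lambda_t\rangle$ for a dense set of $t$, and then upgrade this to convergence of the associated distribution functions in $M$.

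By definition $\langle\mu^n,\lambda\rangle$ is the function $t\mapsto\int\lambda_t\,d\mu^n=\mu^n(\tau_0(x,z)\leq t)$, a nondecreasing right-continuous function into $[0,1]$, i.e.\ the distribution function of a subprobability measure on $[0,\infty]$; since convergence in $M$ is metrised by the Levy metric, which amounts to weak convergence of these measures, it suffices to show $\langle\mu^n,\lambda\rangle_t\to\langle\mu,\lambda\rangle_t$ for all $t$ in a dense subset of $[0,\infty)$. As $\iota_\alpha$ is continuous by Theorem~\ref{MinusOneExtension}, I would apply the Skorokhod representation theorem to $\zeta^n\to\zeta$ on the Polish space $\hat E$ to obtain $\hat E$-valued random variables $W^n\to W$ almost surely with $\mathrm{law}(W^n)=\zeta^n$ and $\mathrm{law}(W)=\zeta$; setting $(X^n,Z^n)\coloneqq\iota_\alpha(W^n)$ and $(X,Z)\coloneqq\iota_\alpha(W)$ we get $(X^n,Z^n)\to(X,Z)$ almost surely in $D([-1,\infty))\times\R$, all of these lie in $\iota_\alpha(\hat E)$, and they have laws $\mu^n$ and $\mu$ respectively, so that $\langle\mu^n,\lambda_t\rangle=\Expec{\lambda_t(X^n,Z^n)}$ and likewise for $\mu$.

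The crossing hypothesis \eqref{crossingPropertyMeasure} says precisely that $(X,Z)$ almost surely satisfies the crossing property of Lemma~\ref{IndicatorContinuity}; on that full-measure event the convergence $(X^n,Z^n)\to(X,Z)$ lets us invoke Lemma~\ref{IndicatorContinuity} to get $\lambda_t(X^n,Z^n)\to\lambda_t(X,Z)$ for every $t$ outside a countable (but $\omega$-dependent) set. Since for each such $\omega$ this exceptional set of times is countable, hence Lebesgue-null, a Tonelli argument applied to the jointly measurable indicator of non-convergence shows that for Lebesgue-a.e.\ $t$ one has $\lambda_t(X^n,Z^n)\to\lambda_t(X,Z)$ almost surely; dominated convergence (using $0\leq\lambda_t\leq1$) then gives $\langle\mu^n,\lambda_t\rangle\to\langle\mu,\lambda_t\rangle$ for Lebesgue-a.e.\ $t\geq0$. (Alternatively, since $s\mapsto\lambda_s(x,z)$ jumps only at $\tau_0(x,z)$, one expects the only bad $t$ to be the at most countably many atoms of the law of $\tau_0$ under $\mu$; but the Tonelli route avoids having to argue convergence of $\tau_0$ along the sequence.) Either way, the required convergence holds on a dense subset of $[0,\infty)$.

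Finally I would close with the elementary fact that if nondecreasing functions $F_n$ converge to a nondecreasing $F$ on a dense set, then $F_n(t_0)\to F(t_0)$ at every continuity point $t_0$ of $F$ (sandwich $F_n(t_0)$ between $F_n(s)$ and $F_n(u)$ for dense-set points $s<t_0<u$ with $F(u)-F(s)$ small), which is exactly weak convergence of the associated measures on $[0,\infty]$, hence $\langle\mu^n,\lambda\rangle\to\langle\mu,\lambda\rangle$ in $M$. I expect the one genuine difficulty to be the middle step: reconciling the $\mu$-null exceptional set of paths (on which the crossing property may fail) with the $\omega$-dependent countable exceptional set of times coming from Lemma~\ref{IndicatorContinuity}, so as to extract a single deterministic dense set of times along which the numerical sequence $\langle\mu^n,\lambda_t\rangle$ actually converges; the Tonelli observation above is what resolves it, everything else being soft consequences of continuity of $\iota_\alpha$, the Skorokhod representation theorem, and monotonicity.
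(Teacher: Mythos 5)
Your argument is correct and takes a somewhat different route from the paper's. The paper exploits the compactness of $M$: the functions $l^n \coloneqq \langle\mu^n,\lambda\rangle$ all lie in the compact space $M$, so a limit point $l$ exists, and the remaining task --- carried out by following Cuchiero et al.\ Lemma~5.5 with the path functionals replaced by $\tau_0(x,z)$, $\lambda_t(x,z)$ --- is to identify every such limit point with $\langle\mu,\lambda\rangle$. You bypass compactness altogether: Skorokhod representation turns $\zeta^n\to\zeta$ into almost-sure convergence of $\hat E$-valued random variables; the crossing hypothesis \eqref{crossingPropertyMeasure} (after a union over a countable dense set of $h>0$, using monotonicity in $h$) gives the pathwise crossing property $\mu$-a.s., which activates Lemma~\ref{IndicatorContinuity}; and the Tonelli step cleanly resolves the subtlety that the countable exceptional set of times from that lemma is $\omega$-dependent, so dominated convergence gives $\langle\mu^n,\lambda_t\rangle\to\langle\mu,\lambda_t\rangle$ for Lebesgue-a.e.\ $t$. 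The monotone sandwich argument then upgrades a.e.-in-$t$ convergence to convergence at all continuity points of the limit, hence Levy-metric convergence in $M$. Both routes hinge on the same core ingredient, Lemma~\ref{IndicatorContinuity}; yours trades the appeal to the internals of the Cuchiero proof for an explicit Skorokhod representation and a Fubini argument, which is more self-contained and avoids having to prove uniqueness of subsequential limits. The one small point worth stating explicitly is that the Tonelli argument requires joint measurability of $(\omega,t)\mapsto\lambda_t(X^n(\omega),Z^n(\omega))$, which holds since this is just $\mathbbm{1}_{\tau_0(X^n,Z^n)\leq t}$ with $\tau_0$ measurable.
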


\begin{proof}
The map $\iota_{\alpha}$ is continuous. As a result, the convergence  $\mu^n \to \mu$ in $\mathcal{P}(D[-1,\infty)\times \R)$ follows. We set $l_t^n \coloneqq \langle \mu^n, \lambda_t \rangle$ and note that for all $n \in \N$ the map $l^n$ is in $M$. As mentioned at the beginning of the section, the space $M$ is compact. Thus,  we have the existence of a limit point $l \in M$. We need to prove that $l = \langle \mu, \lambda \rangle$. This can be done following the arguments in the proof of \cite{Cuchiero2020} Lemma 5.5 and replacing their path functionals by the paths functional $\tau_0(x,z)$ and $\lambda_t(x,z)$ considered here.
\end{proof}

The following two lemmas are important tools in the proof of convergence to the McKean-Vlasov problem. Their proofs follow from the argument used in \cite{Cuchiero2020}, replacing the path functionals
with the functionals $\tau_0(x,z)$ and $\lambda_t(x,z)$, and extending the measures involved to the spaces $\hat{E}$ and $D([-1,\infty)) \times \R$.

\begin{lemma} \label{BrownianMotionLemma}
For almost every realization $(\omega,z)$, if $\mbox{law}((W,\Lambda,\xi))= \zeta(\omega,z)$, then $W-W_0$ is a Brownian motion with respect to the filtration generated by $(W,\Lambda)$. In particular, $W-W_0$ is independent of $W_0$.
\end{lemma}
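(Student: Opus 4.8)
The plan is to disintegrate the empirical-measure limit over the realization of the driving randomness, and then transfer the martingale property of increments of a Brownian motion from the particle level to the limit. Concretely, fix the probability space carrying the Skorokhod representations from the preceding corollary, so that $\zeta_N \to \zeta$ almost surely in $\mathcal{P}(\hat{E})$ and $\mathrm{law}(\zeta_N)$ is the law of $\frac{1}{N}\sum_{i=1}^N \delta_{(X_{0-}^{i,N}+B^i,\Lambda^N,\xi^{i,N})}$. I would first observe that for each $N$, and each pair of times $s \le t$ from a co-countable set, each bounded continuous functional $\Phi$ of the path up to time $s$ and each bounded continuous test function, the identity
\begin{equation*}
    \Big\langle \zeta_N, \big(W_t - W_s\big)\,\Phi\big((W_u,\Lambda_u,\xi)_{u \le s}\big)\Big\rangle
\end{equation*}
has expectation (over the background randomness) equal to zero, because under $\mathrm{law}(\zeta_N)$ this is $\frac{1}{N}\sum_i (B_t^i - B_s^i)\Phi(\cdots^i)$ and each $B^i$ is a Brownian motion independent of the initial data, while $\Phi$ is $\mathcal F_s^i$-measurable; similarly the conditional-variance (Lévy) characterization $\langle (W_t - W_s)^2 - (t-s)\rangle$ against $\mathcal F_s$-measurable test functions integrates to zero.

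The second step is to pass to the limit. Using almost sure convergence $\zeta_N \to \zeta$, continuity of evaluation maps on $D([-1,\infty))$ at continuity points of the limit path (which hold on a co-countable set of times, and the $W$-component is continuous here since it is a Brownian path), and uniform integrability coming from the tightness/moment bounds on the increments, I would conclude that for $\mathrm{law}(\zeta)$-a.e.\ disintegrated measure $\zeta(\omega,z)$ the same martingale and conditional-variance identities hold. Here one must be slightly careful: the quantities $\langle \zeta_N, \cdot\rangle$ are themselves random (functions of the background $\omega$), so the argument is: their $L^1$-limit is $\langle \zeta, \cdot\rangle$, the $L^1$-limit of the zero sequence is zero, hence $\langle \zeta, \cdot\rangle = 0$ almost surely; taking a countable family of $(s,t,\Phi)$ that is measure-determining gives the conclusion simultaneously for a.e.\ realization. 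Then Lévy's characterization theorem applied under $\zeta(\omega,z)$ shows $W - W_0$ is a Brownian motion for the filtration generated by $(W,\Lambda)$, and independence of $W_0$ is immediate from the increments being independent of $\mathcal F_0$, which contains $\sigma(W_0)$.

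The main obstacle I anticipate is the exchange of limit and expectation in the second step, i.e.\ justifying that $\mathbb{E}\langle \zeta_N, G\rangle \to \mathbb{E}\langle \zeta, G\rangle$ for the relevant functionals $G$ (which involve the unbounded increment $W_t - W_s$ and its square). This requires a uniform-integrability estimate: a uniform-in-$N$ bound on $\mathbb{E}[|B_t^i - B_s^i|^{2+\delta}]$ suffices, and this is trivially available since the $B^i$ are genuine Brownian motions, but one has to phrase it at the level of the empirical measure and combine it with the a.s.\ weak convergence and a.e.\ continuity of the path evaluations. A secondary technical point is restricting all time arguments to the co-countable set on which the limiting paths are continuous and the evaluation maps are continuous, exactly as in the cited arguments from \cite{Cuchiero2020}; since the $W$-marginal consists of continuous paths this set is in fact all of $[0,\infty)$, which simplifies matters. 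Given these ingredients the proof reduces to the standard martingale-problem transfer argument, so I would present it by citing the analogous step in \cite{Cuchiero2020} and indicating the two modifications: carrying the extra coordinate $\xi$ along inertly, and working on $D([-1,\infty))$.
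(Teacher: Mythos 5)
Your proposal is correct and takes essentially the same approach as the paper: the paper itself does not spell out the argument but simply cites the corresponding lemma in \cite{Cuchiero2020} and notes that one must carry the extra coordinate $\xi$ along and work on the extended spaces $\hat{E}$ and $D([-1,\infty)) \times \R$, which is exactly the martingale-problem/L\'evy-characterization transfer that you reconstruct. Your two anticipated technical issues (uniform integrability of increment functionals and restriction to a co-countable set of evaluation times) are precisely the ones handled in the cited argument, and your observation that testing against $\Phi$ depending also on $\xi$ gives a formally stronger filtration statement that still implies the lemma is harmless.
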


\begin{lemma} \label{CrossingAS}
Suppose that $\mbox{law}(\mu_N) \to \mbox{law}(\mu)$ for some random variable $\mu$ in $\mathcal{P}(D[-1,\infty) \times \R)$ with $\mu_N$ and $\mu$ of the same form as in Lemma \ref{LossConvergenceLemma}. Then $\mu$ satisfies property (\ref{crossingPropertyMeasure}) almost surely.
\end{lemma}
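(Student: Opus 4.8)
The plan is to follow the strategy of the corresponding result in \cite{Cuchiero2020}, adapted to the present two-component setting where the state variable carries along the exponential mark $z$ (the realisation of $\xi$). Recall that $\mu = \iota_\alpha(\zeta)$ with $\zeta$ the limiting $\mathcal{P}(\hat{E})$-valued random variable, so a $\mu$-typical path has the form $(x,z)$ with $x = \hat{W} - \alpha\check{\Lambda}$ and $\mathrm{law}((W,\Lambda,\xi)) = \zeta$. By the map $\Xi$ from the proof of Lemma \ref{IndicatorContinuity}, the crossing property for $(x,z)$ is equivalent to the crossing property at the zero level for $x+z = \hat{W}+z - \alpha\check{\Lambda}$, so it suffices to show that, almost surely under $\mu$, the process $x+z$ does not touch its running infimum without immediately going strictly below it at the first time it hits $-z$ (equivalently, the first time $x+z$ hits $0$).

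The key steps, in order, would be: (i) Fix a realisation $(\omega,\cdot)$ for which Lemma \ref{BrownianMotionLemma} holds, so that under $\zeta(\omega,z)$ the increment $W - W_0$ is a Brownian motion with respect to the filtration generated by $(W,\Lambda)$, and in particular is independent of $W_0$ (and of $z$, which is measurable with respect to the marked initial data). (ii) Write $\tau_0 = \tau_0(x,z)$ for the first hitting time of $0$ by $x + z$. On $\{\tau_0 < \infty\}$, the deterministic loss term $\alpha\check\Lambda$ is increasing, so for $s$ small, $x_{\tau_0+s} - x_{\tau_0} = (W_{\tau_0+s} - W_{\tau_0}) - \alpha(\check\Lambda_{\tau_0+s} - \check\Lambda_{\tau_0}) \le W_{\tau_0+s} - W_{\tau_0}$; hence failure of the crossing property would force $W_{\tau_0 + s} - W_{\tau_0} \ge 0$ for all $s \in [0,h]$, i.e. the Brownian motion $W - W_0$ stays above its value at the stopping time $\tau_0$ on a whole interval. (iii) Invoke the strong Markov property and the well-known fact that a standard Brownian motion almost surely does not have a point from which it is non-decreasing on a right neighbourhood (equivalently, $0$ is regular for $(-\infty,0)$ for the post-$\tau_0$ Brownian increment); since $\tau_0$ is a stopping time for the filtration generated by $(W,\Lambda)$ and $W - W_0$ is a Brownian motion for that filtration, this applies verbatim. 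This shows the exceptional set has $\zeta(\omega,z)$-probability zero for a.e.\ $(\omega,z)$, and integrating over $z$ and then over $\omega$ gives $\mu$-probability zero, which is exactly \eqref{crossingPropertyMeasure}.

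Some care is needed in two places. First, one must handle the event $\{\tau_0 = \infty\}$, on which there is nothing to prove since the set in \eqref{crossingPropertyMeasure} only concerns paths that do hit their infimum at a finite $\tau_0$; and one must make sure that $\tau_0$ is genuinely a stopping time with respect to the filtration of $(W,\Lambda)$, which follows because $\check\Lambda$ is adapted (indeed it is part of the data generating the filtration) and hitting times of closed sets by cadlag processes are stopping times under the usual conditions. Second, one must check measurability of the exceptional set as a subset of $D([-1,\infty))\times\R$ so that the Fubini-type argument passing from $\zeta(\omega,z)$-null to $\mu$-null is legitimate; this is routine since $\tau_0$ and the infimum functional are Borel on Skorokhod space.

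The main obstacle I expect is not conceptual but bookkeeping: making the reduction via $\Xi$ and the extension to $D([-1,\infty))\times\R$ fully rigorous while quoting Lemma \ref{BrownianMotionLemma} in the right form, so that the Brownian regularity argument can be applied to the post-$\tau_0$ increment with the mark $z$ correctly frozen. Once the filtration issues are pinned down, the probabilistic heart of the argument — Brownian motion immediately crosses below any level it reaches — is standard, and the rest parallels \cite{Cuchiero2020} Lemma 5.6 with their path functionals replaced by $\tau_0(x,z)$ and $\lambda_t(x,z)$.
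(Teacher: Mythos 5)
Your proposal is correct and takes essentially the same route as the paper, which simply defers to the argument of Cuchiero et al.\ with the path functionals replaced by $\tau_0(x,z)$ and $\lambda_t(x,z)$ and the underlying spaces enlarged to $\hat{E}$ and $D([-1,\infty))\times\R$. Your reduction via the shift map $\Xi$, the invocation of Lemma~\ref{BrownianMotionLemma} to get a Brownian motion adapted to the $(W,\Lambda)$-filtration with the mark $z$ frozen, and the strong-Markov/regularity-of-zero argument at the stopping time $\tau_0$ are precisely the intended adaptation.
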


\begin{proposition} \label{LimitLaws}
For $N \in \N$, let $(X^N,\Lambda^N)$ be a solution to the particle system
\begin{equation*}
\begin{cases}
    X_t^{i,N} &= \Theta(Y^{i,N})_t = \Theta(X_{0-}^{i,N}+B_t^i - \alpha \Lambda_t^N) = X_{0-}^{i,N}+B_t^i - \alpha \Lambda_t^N + L_t^{i,N} \\
    \tau_{i,N} &= \inf \{t \geq 0: L_t^{i,N} \geq \xi^{i,N} \} \\
    \Lambda_t^n &= \frac{1}{N} \sum_{i=1}^N \mathbbm{1}_{\tau_{i,N} \leq t}
\end{cases}
\end{equation*}
for $\xi^{i,N}, i = 1, \ldots, n$, independent, exponentially distributed random variables with parameter $\kappa>0$. Let $\mu^N$ and $\mu_N^{\Theta}$ denote the corresponding empirical measures defined in (\ref{EmpiricalMeasuresDefn}). Suppose that for some random measure $\mu^{\Theta}$ and some measure $\nu_{0-} \in \mathcal{P}(\R_+)$ we have
\begin{equation*}
    \lim_{N \to \infty} \frac{1}{N} \sum_{i=1}^N \delta_{X_{0-}^{i,N}} = \nu_{0-}
\end{equation*}
and $\mbox{law}(\mu_N^{\Theta}) \to \mbox{law}(\mu^{\Theta})$ along some subsequence. Then $\mu^{\Theta}$ corresponds almost surely to the law of a solution to the McKean-Vlasov problem (\ref{ElasticProblem}) with $\mbox{law}(X_{0-})=\nu_{0-}$.
\end{proposition}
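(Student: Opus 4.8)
The plan is to pass to the limit along the chosen subsequence using the machinery assembled above, and identify the limiting law as that of a solution to (\ref{ElasticProblem}). First I would invoke the Skorokhod representation theorem to realise the convergence $\mbox{law}(\mu_N^\Theta) \to \mbox{law}(\mu^\Theta)$, together with $\mbox{law}(\zeta_N)\to\mbox{law}(\zeta)$ and $\mbox{law}(\mu_N)\to\mbox{law}(\mu)$ from the corollaries, on a common probability space with almost sure convergence. By the relations $\mbox{law}(\mu_N)=\mbox{law}(\iota_\alpha(\zeta_N))$, $\mbox{law}(\mu_N^\Theta)=\mbox{law}(\Theta(\mu_N))$ and the continuity of $\iota_\alpha$ (Theorem \ref{MinusOneExtension}) and of $\Theta$ in the $M1$-topology, I get $\mu=\iota_\alpha(\zeta)$ and $\mu^\Theta=\Theta(\mu)$ almost surely, so it suffices to show that $\zeta$ (equivalently $\mu$) encodes a solution of the absorbing/elastic fixed-point equation.

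Next I would fix a realisation in the almost-sure set. Writing $\mbox{law}((W,\Lambda,\xi))=\zeta$, Lemma \ref{BrownianMotionLemma} tells us that $W-W_0$ is a Brownian motion, independent of $W_0$, and the hypothesis on $\frac1N\sum\delta_{X_{0-}^{i,N}}$ identifies $\mbox{law}(W_0)=\nu_{0-}$; we may also read off that $\mbox{law}(\xi)$ is exponential with parameter $\kappa$ (either by passing to the limit in the empirical law of the $\xi^{i,N}$, or by noting $\xi$ enters only through the shifted process). The loss function is the weak limit: on the one hand $\Lambda^N_t=\frac1N\sum_i\mathbbm1_{\tau_{i,N}\le t}=\langle \mu_N,\lambda_t\rangle$ with $\tau_{i,N}=\tau_0(Y^{i,N},\xi^{i,N})$, and on the other Lemma \ref{CrossingAS} guarantees that $\mu$ satisfies the crossing property (\ref{crossingPropertyMeasure}) almost surely, so Lemma \ref{LossConvergenceLemma} gives $\Lambda^N=\langle\mu_N,\lambda\rangle \to \langle\mu,\lambda\rangle$ in $M$. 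Since $M$ is equipped with the L\'evy metric and the $M1$-convergence $\mu^\Theta_N\to\mu^\Theta$ also forces $\Lambda^N=1-\mu^\Theta_N([0,\infty))\cdot(\text{appropriate functional})$ to converge to the corresponding functional of $\mu^\Theta$, the two limits agree and we may call the common limit $\Lambda$. It then remains to check the fixed-point identity: for the limiting $\zeta$ we have $\langle\mu,\lambda_t\rangle=\mu(\{(x,z):\tau_0(x,z)\le t\})=\mathbb P(\inf_{s\le t}(W_0+\xi+(W_s-W_0)-\alpha\Lambda_s)\le 0)$, which is exactly $\Lambda^a_t$ for (\ref{AbsorbingProblem}) with $Y_{0-}=W_0+\xi$; by Lemma \ref{ElasticAbsorbingLemma} this $\Lambda$ solves (\ref{ElasticProblem}). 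Finally $\mu^\Theta=\Theta(\mu)$ is the law of $X=\Theta(Y)$ with $Y_t=W_0+(W_s-W_0)-\alpha\Lambda_t$, i.e. of a solution to (\ref{ElasticProblem}) with initial law $\nu_{0-}$.

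The main obstacle is the step identifying the limiting loss function as a genuine fixed point rather than merely a limit point, i.e. showing $\langle\mu,\lambda\rangle=\Lambda$ where $\Lambda$ is the loss function appearing inside the dynamics of $\mu$ itself. This is where the crossing property is essential: without it, mass of the empirical measure could approach the absorbing barrier tangentially and the indicator functionals $\lambda_t$ would fail to be continuous along the convergence, breaking the self-consistency. The role of Lemmas \ref{IndicatorContinuity}, \ref{LossConvergenceLemma} and \ref{CrossingAS} is precisely to close this loop, and the argument mirrors \cite{Cuchiero2020} Lemma 5.5 / Proposition 5.6 with the path functionals $\tau^a$, $\lambda_t$ there replaced by $\tau_0(x,z)$, $\lambda_t(x,z)$ and the measures extended to $\hat E$ and $D([-1,\infty))\times\R$; I would write out the self-consistency computation carefully but otherwise defer the continuity and tightness bookkeeping to those references.
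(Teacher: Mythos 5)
Your plan is structurally the same as the paper's: realise the convergences almost surely via Skorokhod representation, exploit continuity of $\iota_\alpha$ and $\Theta$ to write $\mu=\iota_\alpha(\zeta)$ and $\mu^\Theta=\Theta(\mu)$, use Lemma~\ref{CrossingAS} + Lemma~\ref{LossConvergenceLemma} for the loss convergence, Lemma~\ref{BrownianMotionLemma} for the Brownian property, the hypothesis plus the continuous mapping theorem for the initial law, and then close the fixed-point loop. You also correctly pinpoint the essential difficulty: identifying the loss inside $\zeta$ with $\langle\mu,\lambda\rangle$ so that the limiting object is genuinely a solution rather than merely a limit point.

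One step in your sketch is wrong, however, and it is precisely at that crux. You propose to close the loop by noting that "$\Lambda^N=1-\mu^\Theta_N([0,\infty))\cdot(\text{appropriate functional})$" converges and agrees with the other limit. In the elastic setting the process $X$ is reflected, not killed, so $\Lambda^N$ \emph{cannot} be read off from $\mu_N^\Theta$ (the empirical law of the reflected paths) alone; the stopping time $\tau_{i,N}$ depends on the hidden variables $L^{i,N}$ and $\xi^{i,N}$, not just on the path of $X^{i,N}$. That is exactly why the paper carries the loss $\Lambda^N$ as an explicit coordinate in $\zeta_N = \frac1N\sum_i\delta_{(X_{0-}^{i,N}+B^i,\,\Lambda^N,\,\xi^{i,N})}$ rather than trying to recover it from $\mu_N^\Theta$. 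The actual closing argument (Step~1 of the paper's proof) is an $L^1$ estimate on $\zeta_N$: fix $t\notin J$, use the reverse triangle inequality to bound
\begin{equation*}
\mathbb{E}\left[\int_{\hat E}\Big\vert\,\lvert l_t-\langle\mu,\lambda_t\rangle\rvert-\lvert l_t-\langle\mu_N,\lambda_t\rangle\rvert\,\Big\vert\,d\zeta_N\right]\leq\mathbb{E}\bigl[\lvert\langle\mu,\lambda_t\rangle-\langle\mu_N,\lambda_t\rangle\rvert\bigr],
\end{equation*}
observe that $\langle\mu_N,\lambda\rangle=l$ for $\zeta_N$-a.e.\ $l$ by construction so the $N$-term inside drops out, let $N\to\infty$ using the DCT and continuity of $l\mapsto l_t$ at $t\notin J$, and then let $t$ run through a countable dense set together with right-continuity. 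Replacing your "two limits agree" sentence with this estimate turns your sketch into the paper's proof.
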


\begin{proof}
We can assume that the empirical measures are given as $\mu_N = \iota_{\alpha}(\zeta_N)$, $\mu_N^{\Theta}= \Theta(\mu_N)$ with $\zeta_N$ as in (\ref{EmpiricalMeasuresDefn}). The same can be assumed for the limit points $\mu= \iota_{\alpha}(\zeta)$ and $\mu^{\Theta}=\Theta(\mu)$ by continuity. The map $t \mapsto \mathbb{E} [\langle \mu, \lambda_t \rangle]$ is increasing. Hence, it has at most countably many points of discontinuity, and the same holds for the map $t \mapsto \mathbb{E} [\int l_t d \zeta(\omega,l,z)]$. We denote the set of discontinuities of these maps by $J$ and fix a $t$ that is a continuity point of all these maps, i.e. $t \notin J$. Then $\langle \mu, \lambda_{t-} \rangle = \langle \mu, \lambda_t  \rangle$ almost surely. From Lemma \ref{CrossingAS} the crossing property holds and we can use Lemma \ref{LossConvergenceLemma} to deduce the almost sure convergence $\lim_{N \to \infty} \langle \mu_N, \lambda_t \rangle = \langle \mu, \lambda_t \rangle$.
The goal is to show that the measure $\mu$ is the law of the process $(Y,\xi)$ of the McKean-Vlasov problem.

\underline{Step 1:} 
We show that for almost every realization $(\omega,z)$, if $\mbox{law}(W,\Lambda,\xi)= \zeta(\omega,z)$, then $\Lambda = \langle \mu(\omega,z),\lambda \rangle$ almost surely. We obtain the estimate
\begin{equation*}
    \mathbb{E} \left[ \int_{\Bar{E}} \Big\vert \lvert l_t - \langle \mu, \lambda_t \rangle \rvert - \lvert l_t - \langle \mu_N, \lambda_t \rangle \rvert \Big\vert d \zeta_N(\omega,l,z) \right] \leq \mathbb{E} \left[\lvert \langle \mu, \lambda_t \rangle - \langle \mu_N, \lambda_t \rangle \rvert \right]
\end{equation*}
and see that the right-hand side vanishes as $N \to \infty$ using the DCT. Since $t \notin J$ the map $l \mapsto l_t$ is continuous on $M$ and it follows that for $\zeta$-almost every $l$ we have
\begin{equation*}
    \mathbb{E} \left[\int_{\hat{E}} \lvert l_t - \langle \mu,\lambda_t \rangle \rvert d \zeta(\omega,l,z) \right] \leq \lim_{N \to \infty} \mathbb{E} \left[\int_{\hat{E}} \lvert l_t - \langle \mu_N,\lambda_t \rangle \rvert d \zeta_N(\omega,l,z) \right] =0
\end{equation*}
because $\langle \mu_N,\lambda \rangle = l$ for $\zeta_N$-almost every $l \in M$, almost surely. Let $t$ range through a countable dense subset of $[0, \infty) \setminus J$ and use right-continuity to conclude.

\underline{Step 2:} 
We show that for a.e. realization $(\omega,z)$ if $\mbox{law}(W,\Lambda,\xi)= \zeta(\omega,z)$, then $\mbox{law}(W_0) = \nu_{0-}$. Let the evaluation map $\hat{\pi}_0$ defined on $\hat{E}$ be given by $(\omega,l,z) \to \omega_0$. Since $\hat{\pi}_0$ is a continuous map, the continuous mapping theorem yields that for the pushforward of $\zeta$ by $\hat{\pi}_0$ we obtain
\begin{equation*}
    \hat{\pi}_0(\zeta) = \lim_{N \to \infty} \hat{\pi}_0(\zeta_N) = \lim_{N \to \infty} \frac{1}{N} \sum_{i=1}^N \delta_{X_{0-}^{i,N}} = \nu_{0-}
\end{equation*}

\underline{Step 3:}
We have $\mbox{law}(X_{0-})=\nu_{0-}$ and $\iota_{\alpha}(\zeta)=\mu$. Step 1 yields that if $\mbox{law}(W,\Lambda,\xi)= \zeta(\omega,z)$ the equation 
\begin{equation*}
    \mbox{law}((W_0,W-W_0)) = \mbox{law}((X_{0-},Y-X_{0-}+ \alpha \langle \mu,\lambda \rangle))
\end{equation*}
holds. 
Using Lemma \ref{BrownianMotionLemma} we obtain that $Y-X_{0-}+ \langle \mu,\lambda \rangle$ is a Brownian motion. 

If we now combine the results from Step 2, Step 3 and apply the pushforward with the Skorokhod map we obtain the theorem by the continuous mapping theorem.
\end{proof}

\begin{theorem}
Suppose that $X_{0-}$ has density $f$ that is bounded and changes monotonicity finitely often on any compact interval. Let $(X^{i,N},\Lambda^N)$ be a physical solution to the particle system (\ref{particleSystem}) and let $\mu^{\Theta}$ be the law of the unique physical solution to the McKean-Vlasov problem (\ref{ElasticProblem}). Then we have the convergence
\begin{equation}
    \lim_{N \to \infty} \frac{1}{N} \sum_{i=1}^N \delta_{X^{i,N}} = \mu^{\Theta}
\end{equation}
weakly on $\mathcal{P}(D[-1,\infty))$.
\end{theorem}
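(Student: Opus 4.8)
The plan is to combine the tightness already proved with the identification of subsequential limits as solutions, and then to use uniqueness of physical solutions to pin down the limit. The initial conditions $X_{0-}^i$ in (\ref{particleSystem}) are i.i.d.\ with bounded density $f$ and the $\xi^i$ are i.i.d.\ exponential, so both are tight on $\R$ and, by the law of large numbers, $\frac1N\sum_{i=1}^N \delta_{X_{0-}^i} \to \nu_{0-}$ where $\nu_{0-}(dx)=f(x)\,dx$. Corollary \ref{empiricalMeasureTightness} then applies and gives tightness of $\mu_N^\Theta = \frac1N\sum_{i=1}^N \delta_{X^{i,N}}$ on $\mathcal{P}(D([-1,\infty)))$. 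Since the claimed limit $\mu^\Theta$ is deterministic, it suffices to show that every subsequential limit in law of $(\mu_N^\Theta)$ equals $\mu^\Theta$; this yields convergence in probability, hence the asserted weak convergence. So fix a subsequence along which $\mathrm{law}(\mu_N^\Theta) \to \mathrm{law}(\bar\mu^\Theta)$.

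First I would apply Proposition \ref{LimitLaws}: along this subsequence $\bar\mu^\Theta$ is almost surely the law of a solution to (\ref{ElasticProblem}) with $\mathrm{law}(X_{0-})=\nu_{0-}$, realised as $\bar\mu^\Theta = \Theta(\bar\mu)$ with $\bar\mu = \iota_\alpha(\bar\zeta)$ and loss function $\bar\Lambda = \langle \bar\mu,\lambda\rangle$. The remaining, and central, step is to upgrade ``solution'' to ``physical solution'', i.e.\ to verify condition (\ref{elasticPhysical}) for $\bar\Lambda$. Here I would use that, by \cite{Cuchiero2020}, the particle system (\ref{particleSystem}) is both the physical and the minimal solution, so that the jump of $\Lambda^N$ at any time is the smallest mass whose removal makes the boundary cascade self-consistent. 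At a jump time $t$ of $\bar\Lambda$ one then shows, using the representation of $\bar\Lambda$ via the conditional laws $\bar\zeta(\omega,z)$ and Lemma \ref{BrownianMotionLemma} (the driving noise is a Brownian motion independent of $W_0$), that $\Delta\bar\Lambda_t$ cannot exceed the right-hand side of (\ref{elasticPhysical}); the reverse inequality holds automatically because $\bar\Lambda$ is a solution. This is exactly the argument of \cite{Cuchiero2020} for the absorbing model, and it transfers once their path functionals are replaced by $\tau_0(x,z)$ and $\lambda_t(x,z)$ and the initial law by that of $X_{0-}+\xi$; alternatively, by Lemma \ref{ElasticAbsorbingLemma} one may recast (\ref{particleSystem}) per particle as an absorbing particle system with data $X_{0-}^i+\xi^i$, invoke the known convergence of physical absorbing particle systems to the physical absorbing solution, and translate back through the continuous map $\Theta$. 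Once $\bar\Lambda$ is known to be physical, Theorem \ref{ThmPhysicalSoltn} (its hypothesis on $f$ being in force) forces $\bar\Lambda$, and hence $\bar\mu^\Theta$, to coincide almost surely with the law $\mu^\Theta$ of the unique physical solution. As the subsequence was arbitrary, the full sequence converges.

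The main obstacle is precisely this identification of the limit as a \emph{physical} solution rather than just some solution. Tightness and the passage to a generic solution are delivered cleanly by the $M1$-machinery -- continuity of $\iota_\alpha$ and $\Theta$, together with the crossing-property results of Lemmas \ref{IndicatorContinuity}, \ref{LossConvergenceLemma} and \ref{CrossingAS}. Physicality, however, is a statement about the exact size of the jumps at the instants they occur, and one must control the boundary cascade of the finite system and show that, in the $M1$-limit, it reduces to the minimal cascade, without any a priori regularity of $\bar\Lambda$ at its discontinuities; this is the one genuinely technical part. Checking the hypotheses of Corollary \ref{empiricalMeasureTightness}, Proposition \ref{LimitLaws} and Theorem \ref{ThmPhysicalSoltn} is routine.
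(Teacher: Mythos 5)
Your proposal is correct and follows essentially the same route as the paper: tightness from Corollary \ref{empiricalMeasureTightness}, identification of limit points as solutions via Proposition \ref{LimitLaws}, and then uniqueness of the physical solution from Theorem \ref{ThmPhysicalSoltn}. For the crucial physicality step, the paper takes exactly your ``alternative'' route -- using Lemma \ref{ElasticAbsorbingLemma} to recast the particle system as an absorbing one with initial data $X_{0-}^i + \xi^i$ and then invoking the known result (from \cite{Delarue2015a}) that physical solutions of absorbing particle systems converge to physical solutions of the absorbing McKean--Vlasov equation.
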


\begin{proof}
Corollary \ref{empiricalMeasureTightness} yields tightness of the empirical-measure process of the particle system (\ref{particleSystem}). Thus, we know that limit points exist. Moreover, Proposition \ref{LimitLaws} shows that limit points are laws of solutions to the McKean-Vlasov problem. If we now restrict to physical solutions to the particle system, we can use the result from \cite{Delarue2015a} that physical solutions converge to physical solutions and combine it with Lemma \ref{ElasticAbsorbingLemma} and the corresponding result for the particle systems. The conditions on the initial density imply uniqueness of physical solutions using Theorem \ref{ThmPhysicalSoltn} and we can conclude weak convergence.  
\end{proof}

\section{Convergence to Absorbing and Reflecting Models}

The elastic boundary is a mixture of absorbing and reflecting boundaries. Thus we can expect to obtain the absorbing and reflecting models as limit cases of the elastic model. In this section we show that as $\kappa \to \infty$ the solution to the elastic model converges to the solution of the absorbing feedback model. On the other hand, if we consider the limit as $\kappa \to 0$, the elastic model converges to a reflecting Brownian motion. 

Our first goal is to show convergence for the loss process $\Lambda^{\kappa}$ as $\kappa \to 0$ and  $\kappa \to \infty$.

\begin{proposition} \label{lossProcessConv}
Let $\Lambda^{\kappa}$ be a physical solution to the McKean-Vlasov problem  (\ref{ElasticProblem}) with parameter $\kappa>0$. Then we have the two limits
\begin{equation}
    \lim_{\kappa \to 0} \Lambda^{\kappa} = 0, \quad \text{in M1} , \quad \text{and} \quad  \lim_{\kappa \to \infty} \Lambda^{\kappa} = \Lambda^{\infty} , \quad \text{in M1}.
\end{equation}
Moreover, $\Lambda^{\infty}$ is a physical solution to the absorbing feedback model.
\end{proposition}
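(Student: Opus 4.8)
The plan is to reduce everything, via Lemma~\ref{ElasticAbsorbingLemma}, to statements about a family of absorbing feedback problems whose initial condition depends on $\kappa$, and then to exploit monotonicity in $\kappa$ together with the tightness/compactness machinery of Section~4. Recall that, for parameter $\kappa$, the exponential random variable $\xi=\xi^{\kappa}$ has mean $1/\kappa$, and by Lemma~\ref{ElasticAbsorbingLemma} the physical loss $\Lambda^{\kappa}$ is the physical solution of the absorbing problem~(\ref{AbsorbingProblem}) with initial law $\nu_{0-}^{\kappa}\coloneqq\mathrm{law}(X_{0-}+\xi^{\kappa})$. As $\kappa\to\infty$ we have $\xi^{\kappa}\to 0$ in probability, so $\nu_{0-}^{\kappa}\to\nu_{0-}$ weakly; as $\kappa\to 0$ we have $\xi^{\kappa}\to\infty$ in probability, so $\nu_{0-}^{\kappa}$ escapes to infinity. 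This is the source of the two limits.

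First I would prove the $\kappa\to 0$ limit. Coupling $\xi^{\kappa}=\xi^1/\kappa$ with a fixed unit-rate exponential $\xi^1$, one has $\tau^{\kappa}=\inf\{t\ge 0: L_t^{\kappa}\ge \xi^1/\kappa\}$, so $\Lambda_t^{\kappa}=\mathbb{P}(L_t^{\kappa}\ge \xi^1/\kappa)$. The key is a uniform-in-$\kappa$ bound on the reflection term before absorption: from $0\le X_{t\wedge\tau^{\kappa}}=X_{0-}+B_{t\wedge\tau^{\kappa}}-\alpha\Lambda_{t\wedge\tau^{\kappa}}^{\kappa}+L_{t\wedge\tau^{\kappa}}^{\kappa}$, taking expectations as in Lemma~1.1 gives $\mathbb{E}[L_{t\wedge\tau^{\kappa}}^{\kappa}]\le m_{0-}+\alpha$, uniformly in $\kappa$ and $t$. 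Hence, for fixed $t$, $\Lambda_t^{\kappa}=\mathbb{P}(L_{t\wedge\tau^{\kappa}}^{\kappa}\ge\xi^1/\kappa)\le\mathbb{P}(L_{t\wedge\tau^{\kappa}}^{\kappa}\ge\xi^1/\kappa, \xi^1\ge 1)+\mathbb{P}(\xi^1<1)\to$ something I control: more carefully, $\mathbb{P}(L^{\kappa}\ge\xi^1/\kappa)\le \mathbb{P}(\xi^1\le \kappa L^{\kappa})$ and, since $L^{\kappa}$ has expectation bounded uniformly, $\kappa L^{\kappa}\to 0$ in probability, giving $\Lambda_t^{\kappa}\to 0$ for each fixed $t\ge 0$. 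Pointwise convergence of increasing functions bounded by $1$ to the (continuous) zero function upgrades to uniform and hence M1 convergence on $[0,\infty)$.

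Next the $\kappa\to\infty$ limit. Here I would first argue monotonicity: if $\kappa_1<\kappa_2$ then $\xi^{\kappa_1}\succeq\xi^{\kappa_2}$ stochastically, and one expects $\Lambda^{\kappa}$ to be increasing in $\kappa$ (larger $\kappa$ means the elastic barrier is crossed sooner); I would establish this by a comparison argument for the absorbing problems with ordered initial conditions $\nu_{0-}^{\kappa_1}\succeq\nu_{0-}^{\kappa_2}$, using that the physical (= minimal) solution is monotone in the initial datum — this can be read off from the minimality characterisation in \cite{Cuchiero2020} combined with Lemma~\ref{ElasticAbsorbingLemma}, since a larger initial law yields a smaller loss and the minimal-solution construction preserves the ordering. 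Bounded monotone convergence then gives a pointwise limit $\Lambda^{\infty}_t$ at every continuity point, hence M1 convergence. To identify $\Lambda^{\infty}$ as a physical solution of the absorbing feedback model (the $\xi=0$ case of~(\ref{ElasticProblem}), i.e.~(\ref{AbsorbingProblem}) with initial law $\nu_{0-}$), I would run the tightness-and-limit-point argument of Section~4: take the trivial "particle system" $X^{\kappa}=\Theta(X_{0-}+B-\alpha\Lambda^{\kappa})$ with $\xi^{\kappa}\to 0$, apply Theorem~\ref{TightnessTheorem} and the crossing-property lemmas (Lemma~\ref{CrossingAS}, Lemma~\ref{LossConvergenceLemma}) with the functionals $\tau_0(x,z)$, $\lambda_t(x,z)$ evaluated along $z=\xi^{\kappa}\to 0$, and conclude as in Proposition~\ref{LimitLaws} that any limit point solves the absorbing problem; physicality passes to the limit exactly as in the proof of the convergence theorem in Section~4 (physical solutions converge to physical solutions, \cite{Delarue2015a}).

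The main obstacle is the monotonicity-in-$\kappa$ (equivalently, monotonicity-in-initial-condition) step used for the $\kappa\to\infty$ limit: it needs a genuine comparison principle for physical/minimal solutions of the absorbing McKean–Vlasov problem under stochastically ordered initial data. I would handle this by working with the minimal solution — which by \cite{Cuchiero2020} coincides with the physical one under our hypotheses — and using its construction as an increasing limit of Picard-type iterates, each step of which is manifestly monotone in the initial law and in $\Lambda$; alternatively, if a direct comparison is available in \cite{delarue2020global}, that can be cited instead. A secondary technical point is ensuring the crossing property holds along the limit $z=\xi^{\kappa}\to 0$, i.e.~that the shifted limit path still crosses strictly; this follows from Lemma~\ref{CrossingAS} applied to the (degenerate) empirical measures, since that lemma only uses the structure $\iota_\alpha(\zeta)$ and the Brownian component, both of which persist in this setting.
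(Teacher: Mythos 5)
Your overall strategy overlaps substantially with the paper's: the monotonicity-in-$\kappa$ step is handled exactly as in the paper, via the Picard iterates $\Gamma^{(n)}_{\kappa}[0]$ of the loss operator (the paper states this induction explicitly). Your instinct that this is the crux is right. But there are two concrete issues and one place where you reach for much heavier machinery than is needed.

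\textbf{The claimed moment bound on $L$ is wrong.} You assert that taking expectations in $0\le X_{t\wedge\tau^{\kappa}}=X_{0-}+B_{t\wedge\tau^{\kappa}}-\alpha\Lambda^{\kappa}_{t\wedge\tau^{\kappa}}+L^{\kappa}_{t\wedge\tau^{\kappa}}$ ``as in Lemma~1.1'' gives $\mathbb{E}[L^{\kappa}_{t\wedge\tau^{\kappa}}]\le m_{0-}+\alpha$ uniformly in $t$. It does not: rearranging the expectation yields $\mathbb{E}[L^{\kappa}_{t\wedge\tau^{\kappa}}]\ge \alpha\,\mathbb{E}[\Lambda^{\kappa}_{t\wedge\tau^{\kappa}}]-m_{0-}$, a \emph{lower} bound (this is precisely how the paper derives a contradiction to continuity in its first lemma). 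A correct route to your conclusion is to note that the unstopped reflection term satisfies $L^{\kappa}_t=(-\inf_{s\le t}(X_{0-}+B_s-\alpha\Lambda^{\kappa}_s))\vee 0\le(-\inf_{s\le t}(X_{0-}+B_s))\vee 0+\alpha$, which is dominated for each fixed $t$ by an integrable random variable independent of $\kappa$; this gives tightness of $L^{\kappa}_t$ (for fixed $t$, not uniformly in $t$), and then $\mathbb{P}(\xi^1\le\kappa L^{\kappa}_t)\to 0$ as you want. The paper avoids all of this with a one-line reverse Fatou argument, using $\xi^{\kappa}\to\infty$ and $\Lambda^{\kappa}\le 1$, so that the limiting event is empty. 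Both work; the paper's is shorter and does not need the (independence and integrability) bookkeeping your coupling requires.

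\textbf{The identification of $\Lambda^{\infty}$ does not require Section~4.} You propose to re-run the tightness and limit-point machinery (Theorem~\ref{TightnessTheorem}, Lemma~\ref{CrossingAS}, Lemma~\ref{LossConvergenceLemma}, Proposition~\ref{LimitLaws}) on a ``trivial particle system''. That is a category mismatch: here there is no empirical measure, only a deterministic monotone family $\Lambda^{\kappa}$, and the pointwise limit $\Lambda^{\infty}_t$ already exists by monotone convergence. The paper simply passes to the limit in $\Lambda^{\kappa}_t=\mathbb{P}(\inf_{s\le t}(X_{0-}+\xi^{\kappa}+B_s-\alpha\Lambda^{\kappa}_s)\le 0)$ using the Portmanteau theorem (the limiting random variable has no atom at $0$), concluding directly that $\Lambda^{\infty}$ solves the absorbing problem with initial law $\nu_{0-}$. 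Your route would eventually get there, but it imports a lot of structure (crossing property, random measures on $\hat{E}$, exchangeability) that plays no role.

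\textbf{Physicality.} You propose citing that ``physical solutions converge to physical solutions''. The paper instead shows $\Lambda^{\infty}$ is \emph{minimal}: for any absorbing solution $\Lambda^a$, the same iteration argument gives $\Gamma^{(n)}_{\kappa}[0]\le\Gamma^{(n)}_{\infty}[0]\le\Lambda^a$, so $\Lambda^{\kappa}\le\Lambda^a$ for all $\kappa$, and hence $\Lambda^{\infty}\le\Lambda^a$; minimality then implies physicality by \cite{Cuchiero2020}. This is both simpler and more self-contained than invoking propagation of physicality along a sequence of models, and it fits naturally into the monotonicity argument you already set up. I would recommend adopting it.

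Finally, a small point on M1 convergence: you correctly observe that pointwise convergence of monotone $[0,1]$-valued functions upgrades to M1. The paper cites \cite{Whitt} Corollary~12.5.1 for exactly this. Just be careful that for the $\kappa\to\infty$ case the limit $\Lambda^{\infty}$ need not be continuous, so the ``upgrades to uniform'' reasoning you use for $\kappa\to 0$ does not apply there; the monotonicity criterion from Whitt gives M1 directly.
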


\begin{proof}
Let $\kappa_1, \kappa_2>0$ be such that $\kappa_1 < \kappa_2$ and consider the two related McKean-Vlasov problems, one with $\xi^{\kappa_1}$ and one with $\xi^{\kappa_2}$. Following \cite{Cuchiero2020} we define the operator $\Gamma_{\kappa}: M \to M$ by
\begin{equation}
    \begin{cases}
        X_t^{\kappa,l} = X_{0-} + \xi^{\kappa} + B_t - \alpha l_t, \quad t \geq 0 \\
        \Gamma_{\kappa}[l]_t = \mathbb{P}(\tau^{\kappa,l} \leq t), \quad t \geq 0, \quad \text{where} \quad \tau^{\kappa,l} = \inf\{t \geq 0, X_t^{\kappa,l} \leq 0 \}.
    \end{cases}
\end{equation}
A solution to the absorbing feedback model is a fixed point of the operator $\Gamma_{\kappa}$ and in \cite{Cuchiero2020} the authors show that the minimal solution and therefore also the unique physical solution can be obtained by $\lim_{n \to \infty} \Gamma^{(n)}_{\kappa}[0]$. Moreover, by Lemma \ref{ThmPhysicalSoltn} the solution to this absorbing model is a solution to (\ref{ElasticProblem}). Using the operators $\Gamma_{\kappa_i},i=1,2$ we have
\begin{align*}
    \Gamma_{\kappa_1}[0]_t = \mathbb{P}(\inf_{s \leq t} X_{0-}+\xi^{\kappa_1}+B_s \leq 0) \leq \mathbb{P}(\inf_{s \leq t} X_{0-}+\xi^{\kappa_2}+B_s \leq 0) =\Gamma_{\kappa_2}[0]_t, 
\end{align*}
for all $t \geq 0$. Assume that 
\begin{equation} \label{InductionHyp}
    \Gamma^{(n)}_{\kappa_1}[0]_t \leq \Gamma^{(n)}_{\kappa_2}[0]_t, \quad \text{for all } t \geq 0,
\end{equation}
for some $n \in \N$. Then we have
\begin{align*}
    \Gamma^{(n+1)}_{\kappa_1}[0]_t = \Gamma_{\kappa_1}[\Gamma^{(n)}_{\kappa_1}[0]]_t &= \mathbb{P}( \inf_{s \leq t} X_{0-} + \xi^{\kappa_1}+B_s - \Gamma^{(n)}_{\kappa_1}[0]_s\leq 0) \\
    &\leq  \mathbb{P}( \inf_{s \leq t }X_{0-} + \xi^{\kappa_2}+B_s - \Gamma^{(n)}_{\kappa_2}[0]_s\leq 0) \\
    &= \Gamma_{\kappa_2}[\Gamma^{(n)}_{\kappa_2}[0]]_t =\Gamma^{(n+1)}_{\kappa_2}[0]_t
\end{align*}
for all $t \geq 0$ and by induction the relation (\ref{InductionHyp}) is true for all $n \in \N$. Therefore, we have that, for all $t \geq 0$, $\Lambda_t^{\kappa}$ is increasing in $\kappa$. Moreover it is bounded between zero and one. This means that limit points exist in both cases and we can define
\begin{equation*}
    \Lambda_t^0 = \lim_{\kappa \to 0} \Lambda_t^{\kappa}, \quad \text{for all $t \geq 0$}, \quad \text{and} \quad  \Lambda^{\infty}_t = \lim_{\kappa \to \infty} \Lambda_t^{\kappa}, \quad \text{for all $t \geq 0$}.
\end{equation*}

In the first case we can use the reverse Fatou lemma. This yields
\begin{align*}
    \Lambda_t^0 = \lim_{\kappa \to 0} \Lambda_t^{\kappa} 
    \leq \mathbb{P}(\lim_{\kappa \to 0} X_{0-} + \xi^{\kappa}+B_t - \alpha \Lambda_t^{\kappa} \leq 0) 
    = 0,
\end{align*}
for all $t \geq 0$. Thus, we obtain $\Lambda^0 = 0$. By \cite{Whitt} Corollary 12.5.1 convergence in the M1-topology follows. 

For the second case we have
\begin{align*}
    \Lambda^{\infty}_t = \lim_{\kappa \to \infty} \Lambda_t^{\kappa}= \lim_{\kappa \to \infty} \mathbb{P}(X_{0-} + \xi^{\kappa}+B_t - \alpha \Lambda_t^{\kappa} \leq 0)
    = \mathbb{P}(X_{0-}+ B_t -\alpha \Lambda^{\infty}_t \leq 0)
\end{align*}
where we applied the Portmanteau Theorem in the last step which is possible because the law of the random variable under consideration has no atoms. Hence $\Lambda^{\infty}$ is a solution to the absorbing feedback model with initial condition $X_{0-}$. Another application of \cite{Whitt} Corollary 12.5.1 yields the desired convergence in M1. 

To show that $\Lambda^{\infty}$ is a physical solution we let $\Lambda^a$ be a solution to the absorbing feedback model. Then we have
$\Lambda^{\kappa} \leq \Lambda^a$ for all $\kappa>0$ and by taking limits we obtain $\Lambda^{\infty} \leq \Lambda^a$. Thus $\Lambda^{\infty}$ is the minimal solution and therefore by \cite{Cuchiero2020} a physical solution.
\end{proof}

\begin{proposition} \label{IndicatorProp}
Let $\tau^{\kappa}$ be the elastic stopping time in the feedback model (\ref{ElasticProblem}) with parameter $\kappa>0$ and denote by $\tau^{\infty}$ the absorbing stopping time in the absorbing feedback model. Then we have the following almost sure convergences in the $M1$-topology
\begin{equation}
    \mathbbm{1}_{t<\tau^{\kappa}} \to 1, \quad \text{as } \kappa \to 0; \quad \mathbbm{1}_{t<\tau^{\kappa}} \to \mathbbm{1}_{t<\tau^{\infty}}, \quad \text{as } \kappa \to \infty. 
\end{equation}
\end{proposition}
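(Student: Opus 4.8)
The plan is to realise all the models on one probability space via the coupling $\xi^\kappa=\kappa^{-1}E$, with $E$ a fixed $\mathrm{Exp}(1)$ variable independent of $(X_{0-},B)$, so that $\kappa\mapsto\xi^\kappa$ is decreasing with $\xi^\kappa\to\infty$ as $\kappa\downarrow0$ and $\xi^\kappa\to0$ as $\kappa\to\infty$. As in the proof of Lemma~\ref{ElasticAbsorbingLemma}, write $\tau^\kappa=\inf\{t\geq0:\tilde Y^\kappa_t\leq0\}$ with $\tilde Y^\kappa_t\coloneqq X_{0-}+\xi^\kappa+B_t-\alpha\Lambda^\kappa_t$, and $\tau^\infty=\inf\{t\geq0:\tilde Y^\infty_t\leq0\}$ with $\tilde Y^\infty_t\coloneqq X_{0-}+B_t-\alpha\Lambda^\infty_t$. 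As established in the proof of Proposition~\ref{lossProcessConv}, for every fixed $t$ the quantity $\Lambda^\kappa_t$ is increasing in $\kappa$ with $\Lambda^\kappa_t\to\Lambda^\infty_t$ as $\kappa\to\infty$ and $\Lambda^\kappa_t\to0$ as $\kappa\downarrow0$. On $D([-1,\infty))$, with the extension conventions of Theorem~\ref{MinusOneExtension}, the map sending $a\in[0,\infty]$ to the single-jump step function $t\mapsto\mathbbm{1}_{t<a}$ is $M1$-continuous (the $M1$ distance between two such functions being controlled by the distance of their jump times, and convergence at $a=\infty$ being uniform on compacts). Hence it suffices, in the two regimes, to prove the almost sure limits $\tau^\kappa\to\infty$ as $\kappa\downarrow0$ and $\tau^\kappa\to\tau^\infty$ as $\kappa\to\infty$.

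The case $\kappa\downarrow0$ is immediate: on any compact $[0,T]$ one has $\tilde Y^\kappa_t\geq\xi^\kappa+\inf_{s\leq T}B_s-\alpha$ for all $t\in[0,T]$ (using $X_{0-}\geq0$ and $\Lambda^\kappa\leq1$), and since $\xi^\kappa=\kappa^{-1}E\to\infty$ while $\inf_{s\leq T}B_s$ is a.s.\ finite, $\tilde Y^\kappa>0$ on $[0,T]$ for all small $\kappa$, so $\tau^\kappa>T$. Thus $\tau^\kappa\to\infty$ and $\mathbbm{1}_{t<\tau^\kappa}\to1$, in fact uniformly on compacts.

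For $\kappa\to\infty$ the coupling makes $\kappa\mapsto\tilde Y^\kappa_t$ non-increasing (both $\xi^\kappa$ and $-\alpha\Lambda^\kappa_t$ decrease in $\kappa$), with $\tilde Y^\kappa_t\downarrow\tilde Y^\infty_t$; consequently $\kappa\mapsto\tau^\kappa$ is non-increasing, and since $\tilde Y^\kappa\geq\tilde Y^\infty$ forces $\tau^\kappa\geq\tau^\infty$, the limit $\tau^\ast\coloneqq\lim_{\kappa\to\infty}\tau^\kappa$ satisfies $\tau^\ast\geq\tau^\infty$. The core of the argument is the reverse bound $\tau^\ast\leq\tau^\infty$ almost surely. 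Because $\Lambda^\infty$ is deterministic, $\tilde Y^\infty$ is adapted to the (augmented) filtration of $(X_{0-},B)$, so $\tau^\infty$ is a stopping time, a.s.\ finite (as $\liminf_{t\to\infty}B_t=-\infty$ and $\Lambda^\infty$ is bounded) with $\tilde Y^\infty_{\tau^\infty}\leq0$ by right-continuity. By the strong Markov property $(B_{\tau^\infty+s}-B_{\tau^\infty})_{s\geq0}$ is a standard Brownian motion, so a.s.\ there is a sequence $s_j\downarrow0$ with $B_{\tau^\infty+s_j}-B_{\tau^\infty}<0$, and since $\Lambda^\infty$ is non-decreasing,
\begin{equation*}
    \tilde Y^\infty_{\tau^\infty+s_j}=\tilde Y^\infty_{\tau^\infty}+\bigl(B_{\tau^\infty+s_j}-B_{\tau^\infty}\bigr)-\alpha\bigl(\Lambda^\infty_{\tau^\infty+s_j}-\Lambda^\infty_{\tau^\infty}\bigr)<0 .
\end{equation*}
Pointwise convergence of the loss processes gives $\tilde Y^\kappa_{\tau^\infty+s_j}\to\tilde Y^\infty_{\tau^\infty+s_j}<0$ as $\kappa\to\infty$ for each $j$, so $\tau^\kappa\leq\tau^\infty+s_j$ for all large $\kappa$; letting first $\kappa\to\infty$ and then $j\to\infty$ yields $\tau^\ast\leq\tau^\infty$. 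Hence $\tau^\kappa\to\tau^\infty$ a.s., and the claimed $M1$-convergence follows from the first paragraph.

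The step I expect to be genuinely delicate is $\tau^\ast\leq\tau^\infty$: a priori the elastic paths $\tilde Y^\kappa$ could remain above $0$ in a right-neighbourhood of $\tau^\infty$ while the limit $\tilde Y^\infty$ merely touches $0$ there, which would push $\lim_\kappa\tau^\kappa$ strictly beyond $\tau^\infty$. Ruling this out is exactly the crossing property underlying Lemma~\ref{IndicatorContinuity}; it holds here because the potentially-problematic drift $-\alpha\Lambda^\infty$ is monotone, so immediately after $\tau^\infty$ the sign of $\tilde Y^\infty$ is governed by the Brownian fluctuations alone. Alternatively, the whole $\kappa\to\infty$ step can be run through Lemma~\ref{IndicatorContinuity} applied to $(X_{0-}+B-\alpha\Lambda^\kappa,\xi^\kappa)\to(X_{0-}+B-\alpha\Lambda^\infty,0)$, the required $M1$ convergence of the first coordinate coming from Proposition~\ref{lossProcessConv} together with Theorem~\ref{MinusOneExtension}, and the crossing property from the argument just given.
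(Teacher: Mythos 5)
Your argument is correct, and it fills in genuine detail that the paper elides. The paper's own proof is a one-liner: it invokes Lemma~\ref{IndicatorContinuity} (via the crossing property established as in Section~4) to get pointwise convergence of the indicators on a co-countable set of times, then cites Whitt's Corollary~12.5.1 for monotone functions. You instead couple the models via $\xi^\kappa=\kappa^{-1}E$ — a step the paper never makes explicit but which is required for an ``almost sure'' statement to have content — and then derive the stopping-time limits $\tau^\kappa\to\infty$ (as $\kappa\downarrow0$) and $\tau^\kappa\to\tau^\infty$ (as $\kappa\to\infty$) directly, before pushing forward through the $M1$-continuous map $a\mapsto\mathbbm{1}_{t<a}$. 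The crossing-property argument you run — strong Markov property at $\tau^\infty$ plus the fact that the drift $-\alpha\Lambda^\infty$ is non-increasing, so fluctuations immediately after $\tau^\infty$ are governed by the incremental Brownian motion — is exactly the substance that Lemma~\ref{IndicatorContinuity}/Lemma~\ref{CrossingAS} encapsulate, and you correctly flag the alternative of invoking those lemmas. Two small advantages of your route: first, it is self-contained, proving a.s.\ convergence of the hitting times rather than routing through lemmas stated for deterministic sequences; second, it handles the $\kappa\downarrow0$ regime cleanly, whereas Lemma~\ref{IndicatorContinuity} as stated requires $z^n\to z\in\R$ and so does not literally apply when $\xi^\kappa\to\infty$ (though that case is essentially trivial, as you show via the bound $\tilde Y^\kappa_t\geq\xi^\kappa+\inf_{s\leq T}B_s-\alpha$). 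One small point to keep in mind: the monotonicity $\kappa\mapsto\Lambda^\kappa_t$ from Proposition~\ref{lossProcessConv} is established for the minimal solutions obtained by iterating $\Gamma_\kappa$; since minimal and physical solutions are shown to coincide under the standing hypotheses, this is consistent with taking $\Lambda^\kappa$ physical, but it is worth being explicit that the $\Lambda^\kappa$ are the minimal ones.
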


\begin{proof}
As in Section 4 we can see that the crossing property is satisfied. We can then use Lemma \ref{IndicatorContinuity} to infer convergence for all $t \in [0,\infty)$ in a co-countable set. Since the functions we consider in this proposition are all monotone, the convergence in M1 follows from \cite{Whitt} Corollary 12.5.1. 
\end{proof}

Equipped with these results we can turn to our goal and show convergence for $\mathbbm{1}_{t<\tau^{\kappa}}X^{\kappa}$.

\begin{theorem}
Let $X^{\kappa}$ be the elastic feedback process corresponding to a physical solution to the McKean-Vlasov problem (\ref{ElasticProblem}) with parameter $\kappa>0$ and $\tau^{\kappa}$ the associated elastic stopping time. Then
\begin{enumerate}
    \item 
    as $\kappa \to \infty,$
    $\mathbbm{1}_{t<\tau^{\kappa}}X^{\kappa}$ converges almost surely in the M1-topology to $\mathbbm{1}_{t<\tau^{\infty}}X^{\infty}$ with $X^{\infty}$ given by
    \begin{equation*}
        X^{\infty} = X_{0-} + B - \alpha \Lambda^{\infty}
    \end{equation*}
     with $\Lambda^{\infty}$ being a physical solution to the absorbing feedback model.
    \item 
    as $\kappa \to 0$, $\mathbbm{1}_{t<\tau^{\kappa}}X^{\kappa}$ converges almost surely in the M1-topology and uniformly to $X^0$ given by
    \begin{align*}
        X^0 = \Theta(X_{0-}+B).
    \end{align*}
    The law of $X^0$ is a reflecting Brownian motion started at $X_{0-}$.
\end{enumerate}
\end{theorem}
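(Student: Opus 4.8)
The plan is to deduce both limits from three ingredients already established: the monotonicity in $\kappa$ together with the pointwise and $M1$ convergence of the loss functions in Proposition~\ref{lossProcessConv}, the $M1$ convergence of the indicators in Proposition~\ref{IndicatorProp}, and the continuity properties of the Skorokhod map $\Theta$ (Lipschitz in the supremum norm on compacts, and $M1$-continuous by \cite{Whitt} Theorem~13.5.1). Throughout I use the coupling from the proof of Proposition~\ref{lossProcessConv}: $B$ and $X_{0-}$ are fixed and $\xi^{\kappa}=E/\kappa$ for a single standard exponential $E$, so $\xi^{\kappa}\to 0$ as $\kappa\to\infty$ and $\xi^{\kappa}\to\infty$ as $\kappa\to 0$; moreover $\Lambda^{\kappa}_t$ is increasing in $\kappa$ with $\Lambda^{\kappa}_t\to 0$ (as $\kappa\to0$), respectively $\Lambda^{\kappa}_t\to\Lambda^{\infty}_t$ (as $\kappa\to\infty$), pointwise for every $t$. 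Write $Y^{\kappa}=X_{0-}+B-\alpha\Lambda^{\kappa}$, so that $X^{\kappa}=\Theta(Y^{\kappa})$ and $L^{\kappa}_t=\bigl(-\inf_{s\le t}Y^{\kappa}_s\bigr)\vee 0$, and recall $\tau^{\kappa}=\inf\{t\ge 0:L^{\kappa}_t\ge\xi^{\kappa}\}=\inf\{t\ge 0:Y^{\kappa}_t\le-\xi^{\kappa}\}$.

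For $\kappa\to 0$ the argument is short. Since $\Lambda^{\kappa}_{\cdot}$ is increasing in time, $\sup_{s\le T}\Lambda^{\kappa}_s=\Lambda^{\kappa}_T\downarrow 0$ for every $T$, so $Y^{\kappa}\to X_{0-}+B$ locally uniformly and, by the Lipschitz property of $\Theta$, $X^{\kappa}\to\Theta(X_{0-}+B)=X^{0}$ locally uniformly. Using only $\Lambda^{\kappa}\le 1$ one obtains $\sup_{s\le T}L^{\kappa}_s\le\alpha+\bigl(-\inf_{s\le T}(X_{0-}+B_s)\bigr)\vee 0<\infty$, a bound uniform in $\kappa$, whereas $\xi^{\kappa}\to\infty$; hence for each $T$ we get $\tau^{\kappa}>T$ for all small $\kappa$, that is $\tau^{\kappa}\to\infty$ almost surely, and $\mathbbm{1}_{t<\tau^{\kappa}}X^{\kappa}_t=X^{\kappa}_t$ on $[0,T]$ eventually. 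This yields the asserted almost sure locally uniform (hence $M1$) convergence to $X^{0}$, and $X^{0}=\Theta(X_{0-}+B)$ is a reflecting Brownian motion started at $X_{0-}$ by L\'evy's theorem.

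For $\kappa\to\infty$ the key reduction is that $L^{\kappa}_t<\xi^{\kappa}$ for every $t<\tau^{\kappa}$, so $\sup_t\bigl|\mathbbm{1}_{t<\tau^{\kappa}}X^{\kappa}_t-\mathbbm{1}_{t<\tau^{\kappa}}Y^{\kappa}_t\bigr|=\sup_t\mathbbm{1}_{t<\tau^{\kappa}}L^{\kappa}_t\le\xi^{\kappa}\to 0$; since the absorbing limit $X^{\infty}=X_{0-}+B-\alpha\Lambda^{\infty}=Y^{\infty}$ carries no reflection term, it suffices to show $\mathbbm{1}_{\cdot<\tau^{\kappa}}Y^{\kappa}\to\mathbbm{1}_{\cdot<\tau^{\infty}}Y^{\infty}$ in $M1$. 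Here $Y^{\kappa}\to Y^{\infty}$ in $M1$ (a fixed continuous function plus an $M1$-convergent loss term, by Proposition~\ref{lossProcessConv}) and $\mathbbm{1}_{\cdot<\tau^{\kappa}}\to\mathbbm{1}_{\cdot<\tau^{\infty}}$ in $M1$ by Proposition~\ref{IndicatorProp}, and I would combine these following the arguments of \cite{Cuchiero2020}. On $[0,\tau^{\infty})$ one has $t<\tau^{\infty}\le\tau^{\kappa}$ (the latter because $Y^{\kappa}\ge Y^{\infty}$), so both sides reduce to $Y^{\kappa}\to Y^{\infty}$ restricted there, and on $(\tau^{\infty},\infty)$ both limits equal $0$. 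The only genuinely delicate point, which I expect to be the crux, is the common jump of the two limits at $\tau^{\infty}$, where a naive product of $M1$-convergent sequences can fail to converge: one must rule out a ``wrong-side overshoot'' of $\mathbbm{1}_{\cdot<\tau^{\kappa}}Y^{\kappa}$ strictly below $0$ near $\tau^{\infty}$. This is resolved using the crossing property of $Y^{\infty}$ (valid by the argument behind Proposition~\ref{IndicatorProp}), the pointwise convergence $\Lambda^{\kappa}_t\to\Lambda^{\infty}_t$, and the monotonicity $Y^{\kappa}\ge Y^{\infty}$: if $Y^{\infty}$ jumps at $\tau^{\infty}$ with $Y^{\infty}_{\tau^{\infty}}<0$, then $Y^{\kappa}_{\tau^{\infty}}\to Y^{\infty}_{\tau^{\infty}}<-\xi^{\kappa}$ eventually, which with $Y^{\kappa}\ge Y^{\infty}$ forces $\tau^{\kappa}=\tau^{\infty}$ for all large $\kappa$, and since the left limits also converge, $Y^{\kappa}_{\tau^{\infty}-}\to Y^{\infty}_{\tau^{\infty}-}$, the jump of $\mathbbm{1}_{\cdot<\tau^{\kappa}}Y^{\kappa}$ at $\tau^{\infty}$ converges to that of the limit; if instead $Y^{\infty}$ reaches $0$ continuously at $\tau^{\infty}$, right-continuity makes all the processes close to $0$ on a neighbourhood of $\tau^{\infty}$ and any overshoot is again bounded by $\xi^{\kappa}$. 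Patching the three regions then gives $M1$-convergence to $\mathbbm{1}_{\cdot<\tau^{\infty}}X^{\infty}$, and no further pushforward by $\Theta$ is needed because the absorbing limit carries no reflection.
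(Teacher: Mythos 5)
The proposal is correct in outline but takes a genuinely different and more hands-on route than the paper; it also contains a couple of points that are left informal where the paper cites Whitt directly.

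For part (ii), the paper runs the same $M1$ machinery as in part (i) and then observes that $X^0$ is continuous so $M1$ convergence upgrades to uniform convergence (Whitt, Theorem 12.4.1). Your argument is more elementary: from $\Lambda^\kappa$ increasing in $t$ and $\Lambda^\kappa_T\downarrow 0$ you get uniform decay of $\Lambda^\kappa$ on compacts, hence locally uniform convergence $Y^\kappa\to X_{0-}+B$, and then $X^\kappa\to X^0$ locally uniformly by the sup-norm Lipschitz property of the Skorokhod map. Combined with the uniform bound on $L^\kappa$ and $\xi^\kappa\to\infty$ giving $\tau^\kappa\to\infty$, this delivers uniform convergence directly and is, if anything, cleaner than the paper's route. (One small nitpick: for the identification of $X^0$ you invoke ``L\'evy's theorem,'' which strictly speaking covers the started-at-zero case; for a general starting point the paper's Tanaka/Revuz--Yor argument, or the SDE characterization, is the right reference.)

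For part (i), the paper's proof passes through $X^\kappa=\Theta(Y^\kappa)\to\Theta(X^\infty)$ in $M1$, then multiplies by the indicator using Whitt Theorem 13.3.2 (common downward jumps), and finally uses $\mathbbm{1}_{t<\tau^\infty}\Theta(X^\infty)=\mathbbm{1}_{t<\tau^\infty}X^\infty$. Your reduction is different and rather elegant: $\mathbbm{1}_{t<\tau^\kappa}X^\kappa=\mathbbm{1}_{t<\tau^\kappa}Y^\kappa+\mathbbm{1}_{t<\tau^\kappa}L^\kappa$ and the second term is uniformly bounded by $\xi^\kappa\to 0$, so you never need to push the limit through $\Theta$ at all. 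What then remains is the convergence of the product $\mathbbm{1}_{\cdot<\tau^\kappa}Y^\kappa$, and this is precisely where the paper leans on a Whitt theorem while you do the common-jump analysis by hand. Your discussion identifies the right issues (monotonicity $Y^\kappa\ge Y^\infty$, hence $\tau^\kappa\ge\tau^\infty$; the crossing property forcing $\tau^\kappa\to\tau^\infty$; overshoots bounded by $\xi^\kappa$), but it is left at the level of a sketch: the deferral ``following the arguments of \cite{Cuchiero2020}'' for the product convergence, and the case where $Y^\infty$ reaches $0$ continuously at $\tau^\infty$, would both need to be written out in terms of parametric representations (or the oscillation criterion) to be a complete $M1$ argument. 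These are the same technical points that the paper's citation to Whitt Theorem 13.3.2 is meant to discharge, so the two proofs are doing morally the same work at the common jump; the paper does it by reference, you do it by a patching argument that is plausible but not fully closed.
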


\begin{proof}
$(i)$
From Proposition \ref{lossProcessConv} we know that $\Lambda^{\kappa}$ converges to $\Lambda^{\infty}$ in $M1$ and that $\Lambda^{\infty}$ is a physical solution. Since the Brownian motion $B$ has continuous paths a.s., the convergence 
\begin{equation*}
    X_{0-}+ B - \alpha \Lambda^{\kappa} \to X^{\infty}, \quad \text{as } \kappa \to \infty
\end{equation*}
in M1 follows from \cite{Whitt} Theorem 12.7.3. By continuity of the Skorokhod map $\Theta$ with respect to the M1-topology we can deduce the convergence
\begin{equation*}
    X^{\kappa} \to \Theta(X^{\infty}), \quad \text{as } \kappa \to \infty
\end{equation*}
in M1. Moreover, by Proposition \ref{IndicatorProp} we have the M1-convergence $\mathbbm{1}_{t<\tau^{\kappa}} \to \mathbbm{1}_{t<\tau^{\infty}}$. Note that both processes $\Theta(X^{\infty})$ and $\mathbbm{1}_{t<\tau^{\infty}}$ only have downward jumps. Thus, we can use \cite{Whitt} Theorem 13.3.2 to obtain the desired convergence
\begin{equation*}
    \mathbbm{1}_{t<\tau^{\kappa}}X^{\kappa} \to \mathbbm{1}_{t<\tau^{\infty}}\Theta(X^{\infty}) , \quad \text{as } \kappa \to \infty,
\end{equation*}
in M1. Since $\tau^{\infty}$ is the first hitting time of zero by $X^{\infty}$ we have the equality
\begin{equation*}
    \mathbbm{1}_{t<\tau^{\infty}}\Theta(X^{\infty}) = \mathbbm{1}_{t<\tau^{\infty}} X^{\infty},
\end{equation*}
which finishes the proof. \\
$(ii)$ Using the results from Proposition \ref{lossProcessConv} and Proposition \ref{IndicatorProp} and following the same arguments as in $(i)$ yields the convergence
\begin{equation}
    \mathbbm{1}_{t<\tau^{\kappa}}X^{\kappa} \to X^0, \quad \text{as } \kappa \to 0,
\end{equation} 
in M1. Since $X^0$ has continuous paths almost surely we obtain the  uniform convergence almost surely by \cite{Whitt} Theorem 12.4.1. The characterization of $X^0$ as a reflecting Brownian motion then follows by applying Tanaka's formula to $X_{0-}+B$ and using \cite{revuz} Chapter VI Corollary 2.2.
\end{proof}

\printbibliography

@article{LedgerSojmark2020,
  title={Uniqueness for the contagious McKean-Vlasov systems in the weak feedback regime},
  author={Ledger, S. and S{\o}jmark, A.},
  journal={Bull. London Math. Soc.},
  volume={52},
  pages={448--463},
  year={2020}
}

@article{LedgerSojmark2021,
author = {Ledger, S. and S{\o}jmark, A.},
title = {At the Mercy of the Common Noise: Blow-ups in a Conditional  McKean--Vlasov Problem},
journal ={Electron. J. Probab.},
volume ={26},
pages = {1--39},
year = {2021}
}

@article{Delarue2015a,
  title={Global Solvability of a networked integrate-and-fire model of McKean-Vlasov type},
  author={Delarue, F. and Inglis, D. and Rubenthaler, S. and Tanr\`{e}, E.},
  journal={Ann. Appl. Probab.},
  volume={25},
  pages={2096--2133},
  year={2015}
}

@article{Delarue2015b,
  title={Particle Systems with a singular mean-field self-excitation},
  author={Delarue, F. and Inglis, D. and Rubenthaler, S. and Tanr\`{e}, E.},
  journal={Stoch. Proc. Appl.},
  volume={125},
  pages={2451--2492},
  year={2015}
}

@article{delarue2020global,
      title={Global solutions to the supercooled Stefan problem with blow-ups: regularity and uniqueness}, 
      author={Delarue,F. and Nadtochiy,S. and  Shkolnikov,M.},
      year={2019},
      journal={arXiv preprint arXiv1902.05174},
}

@article{NadtochiySh2019,
  title={Particle Systems with singular interaction through hitting times:application in systemic risk},
  author={Nadtochiy, S. and Shkolnikov, M.},
  journal={Ann. Appl. Probab.},
  volume={29},
  number ={1},
  pages={89--129},
  year={2019}
}

@article{NadtochiySh2020,
  title={Mean field systems on networks, with singular interaction through hitting times},
  author={Nadtochiy, S. and Shkolnikov, M.},
  journal={Ann. Probab.},
  volume={48},
  number ={3},
  pages={1520--1556},
  year={2020}
}

@article{BakerShk2020,
  title={Zero kinetic undercooling limit in the supercooled Stefan problem},
  author={Baker, G. and Shkolnikov, M.},
  journal={arXiv preprint arXiv:2003.07239},
  year ={2020}
}

@article{HamblyLedgerSojmark2019,
  title={A McKean-Vlasov equation with positive feedback and blow-ups},
  author={Hambly, B.M. and Ledger, S. and S{\o}jmark, A.},
  journal={Ann. Appl. Probab.},
  volume={29},
  number = {4},
  pages={2338--2373},
  year={2019}
}

@article{Cuchiero2020,
  title={Propagation of Minimality in the supercooled Stefan problem},
  author={Cuchiero, C. and Rigger, S. and Svaluto-Ferro, S.},
  journal={arXiv preprint arXiv:2010.03580},
  year ={2020}
}

@Book{revuz,
  title     = {Continuous Martingales and Brownian Motion},
  publisher = {Springer Berlin},
  year      = {1999},
  author    = {Revuz, D. and Yor, M.},
  volume    = {3},
  series    = {Grundlehren der mathematischen Wissenschaften},
}

@Book{Whitt,
  title     = {Stochastic-process limits: an introduction to stochastic-process limits and their application to queues},
  publisher = {Springer Science $\&$ Business Media},
  year      = {2002},
  author    = {Whitt, W.}
}

@Book{Snitzman,
  title     = {Topics in propagation of chaos},
  publisher = {Springer},
  year      = {1991},
  author    = {Sznitman, A. },
  series    = {Saint-Flour XIX 1989},
  pages     = {165--251}
}
\noindent
This research has been supported by the EPSRC Centre for Doctoral Training in Mathematics of Random Systems: Analysis, Modelling and Simulation (EP/S023925/1).

\end{document}